\documentclass{amsart} 

\usepackage{amsmath, amssymb, amsfonts, tikz, hyperref, comment, url, todonotes}
\usetikzlibrary{math}
\usetikzlibrary{decorations.pathreplacing}

\newtheorem{theorem}{Theorem}[section]
\newtheorem{lemma}[theorem]{Lemma}

\theoremstyle{definition}
\newtheorem{example}[theorem]{Example}
\newtheorem{remark}[theorem]{Remark}
\newtheorem{definition}[theorem]{Definition}
\newtheorem*{definition*}{Definition}
\newtheorem*{theorem*}{Theorem}
\newtheorem*{proposition*}{Proposition}
\newtheorem*{corollary*}{Corollary}

\newtheorem{conjecture}[theorem]{Conjecture}

\newcommand{\Z}{\mathbb{Z}}
\renewcommand{\S}{\mathfrak{S}}
\newcommand{\wt}{\mathrm{wt}}
\newcommand{\RC}{\mathcal{RC}}
\newcommand{\code}{\mathrm{code}}
\renewcommand{\P}{\mathcal{P}}
\newcommand{\diag}{\mathrm{diag}}
\newcommand{\id}{\mathrm{id}}

\author{Yibo Gao}
\address{Department of Mathematics, Massachusetts Institute of Technology, \mbox{Cambridge, MA 02139}}
\email{\href{mailto:gaoyibo@mit.edu}{{\tt gaoyibo@mit.edu}}}

\begin{document}
\title{Principal specializations of Schubert polynomials and pattern containment}
\date{\today}

\begin{abstract}
We show that the principal specialization of the Schubert polynomial at $w$ is bounded below by $1+p_{132}(w)+p_{1432}(w)$ where $p_u(w)$ is the number of occurrences of the pattern $u$ in $w$, strengthening a previous result by A. Weigandt. We then make a conjecture relating the principal specialization of Schubert polynomials to pattern containment. Finally, we characterize permutations $w$ whose RC-graphs are connected by simple ladder moves via pattern avoidance. 
\end{abstract}
\maketitle

\section{Introduction}\label{sec:intro}
The study of principal specializations of Schubert polynomials $\nu_w:=\S_w(1,\ldots,1)$ has seen interesting results in recent years. Geometrically, $\nu_w$ equals the degree of the matrix Schubert variety \cite{knutson2005grobner} corresponding to $w$ and combinatorially, $\nu_w$ equals the number of RC-graphs of $w$. For a fixed $n\in\Z_{>0}$, Stanley \cite{stanley2017some} asked the question of determining the permutations $w\in S_n$ that achieve the maximum value of $\nu_w$ and it is conjectured by Merzon and Smirnov \cite{merzon2016determinantal} that such $w$ must be layered. Assuming that $w$ is layered, Morales, Pak and Panova \cite{morales2019asymptotics} determined such $w$'s that achieve the maximum value of $\nu_w$ and the asymptotic behavior of this value.

\begin{definition}\label{def:patternContainment}
For $u\in S_k$ and $w\in S_n$, define the number of \textit{occurrences} of $u$ in $w$ to be
\begin{align*}
p_{u}(w):=\#\{1\leq a_1<\cdots<a_k\leq n\ |\ &w(a_i)<w(a_j)\text{ if and only if }u(i)<u(j),\\
&\text{ for all }1\leq i<j\leq k\}.
\end{align*}
\end{definition}
We can extend Definition~\ref{def:patternContainment} to $w\in S_{\infty}$, permutations of $\Z_{>0}$ with all but a finite number of fixed points, as long as $u(k)\neq k$.

Weigandt \cite{weigandt2018schubert} showed that $\nu_w$ is bounded below by $1+p_{132}(w)$, where $p_{132}(w)$ is the number of 132 patterns in $w$. However, this inequality is far from being tight. In this paper, we strengthen this inequality (Section~\ref{sec:main}) and provide a conjecture (Section~\ref{sec:conj}) that strongly relates the principal specializations of Schubert polynomials $\nu_w$ to enumeration of pattern containment in $w$.

We start with some background on Schubert polynomials and RC-graphs. We refer readers to \cite{bergeron1993RC} and \cite{manivel2001symmetric} for detailed exposition on this subject matter. 

An \textit{RC-graph} of $w$ is a finite subset $D\subset\Z_{>0}\times\Z_{>0}$ such that $$\prod_{i=1}^{\infty}\prod_{j=\infty,(i,j)\in D}^{1}s_{i+j-1}=w$$
is a reduced word for $w$. The set of RC-graphs of $w$ is denoted as $\RC(w)$. For an RC-graph $D$, its \textit{weight} is defined to be $\wt(D)=\prod_{(i,j)\in D}x^i$. The following theorem is well-known.
\begin{theorem}[\cite{bergeron1993RC}]
For a permutation $w\in S_{\infty}$, $\S_w=\sum_{D\in\RC(w)}\wt(D).$
\end{theorem}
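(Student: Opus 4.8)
The plan is to lift the statement into the nilCoxeter algebra and reduce it to a single algebraic identity. Let $\mathcal N$ be the nilCoxeter algebra of $S_\infty$, with generators $u_1,u_2,\dots$ satisfying $u_i^2=0$, the braid relations, and $u_iu_j=u_ju_i$ for $|i-j|\ge 2$; it has $\Z$-basis $\{u_w\}_{w\in S_\infty}$ with $u_w:=u_{a_1}\cdots u_{a_\ell}$ for any reduced word $a_1\cdots a_\ell$ of $w$, and $u_{b_1}\cdots u_{b_k}=u_w$ when $b_1\cdots b_k$ is a reduced word for $w$ while $u_{b_1}\cdots u_{b_k}=0$ when it is not reduced. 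Put $F_w:=\sum_{D\in\RC(w)}\wt(D)$ and let $\mathfrak A:=\sum_w F_w\,u_w$, a formal sum with coefficients in $\Z[x_1,x_2,\dots]$. With $A_i(x):=\cdots(1+xu_{i+2})(1+xu_{i+1})(1+xu_i)$, the first step is to verify $\mathfrak A=A_1(x_1)\,A_2(x_2)\,A_3(x_3)\cdots$, the product making sense coefficientwise because a fixed $u_w$ arises from only finitely many choices of boxes. Indeed, in the expansion of the product, choosing the term $x_iu_m$ from the factor $(1+xu_m)$ of $A_i(x_i)$ records a box at $(i,m-i+1)$, and the product of the chosen $u$'s, read in the order in which they occur in the product, is precisely the word $\prod_i\prod_j s_{i+j-1}$ of the RC-graph definition (with $s\mapsto u$); so that term contributes $\wt(D)\,u_w$ when the corresponding box set $D$ lies in $\RC(w)$, and contributes $0$ (a non-reduced word) otherwise. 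Hence the theorem is equivalent to the identity $\mathfrak A=\sum_w\S_w\,u_w$.

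To prove that identity, I would show that both sides equal the unique formal sum $G=\sum_w g_w u_w$ whose coefficients are polynomials, with $g_w$ homogeneous of degree $\ell(w)$, $g_{\id}=1$, and $\partial_r G=G\,u_r$ for every $r\ge1$, where $\partial_r f=(f-s_rf)/(x_r-x_{r+1})$. Comparing coefficients, $\partial_r G=G\,u_r$ says exactly that $\partial_r g_w=g_{ws_r}$ if $w(r)>w(r+1)$ and $\partial_r g_w=0$ if $w(r)<w(r+1)$. Uniqueness then follows by induction on $\ell(w)$: for $\ell(w)=0$ one has $g_{\id}=1=\S_{\id}$, and for $\ell(w)\ge1$, knowing $g_v=\S_v$ for shorter $v$ gives $\partial_r g_w=\partial_r\S_w$ for all $r$, so $g_w-\S_w$ is killed by every $\partial_r$, hence invariant under all of $S_\infty$, hence constant; being also homogeneous of degree $\ell(w)\ge1$, it must be $0$. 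The family $\{\S_w\}$ has these properties, since $\S_w$ is homogeneous of degree $\ell(w)$, $\S_{\id}=1$, and $\partial_r\S_w$ equals $\S_{ws_r}$ or $0$ according as $r$ is a descent or an ascent of $w$, which is the defining recurrence; a short reindexing, using $u_wu_r=0$ whenever $r$ is a descent of $w$, yields $\partial_r\bigl(\sum_w\S_wu_w\bigr)=\bigl(\sum_w\S_wu_w\bigr)u_r$. Finally $\mathfrak A$ has these properties: its $u_{\id}$-coefficient is $F_{\id}=1$, each $F_w$ is homogeneous of degree $\ell(w)$, and --- since $\partial_r$ passes through every factor $A_i(x_i)$ with $i\notin\{r,r+1\}$ while $u_r$ commutes with every $A_i$ for $i\ge r+2$ --- the relation $\partial_r\mathfrak A=\mathfrak A\,u_r$ collapses to the single identity
\[
\partial_r\bigl(A_r(x_r)\,A_{r+1}(x_{r+1})\bigr)=A_r(x_r)\,A_{r+1}(x_{r+1})\,u_r .
\]

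The identity above is the crux and, I expect, the main obstacle: proving it is a finite but genuinely nontrivial computation in $\mathcal N$ --- the Fomin--Stanley lemma --- in which $u_r^2=0$ and the braid relation are used to cancel the cross terms coming from the $x_r$- and $x_{r+1}$-factors; everything else is formal. One can instead stay entirely combinatorial, checking that $F$ obeys the divided-difference characterization of $\S$: the base case $F_{w_0}=x_1^{n-1}\cdots x_{n-1}$ amounts to the pleasant fact that for $w_0\in S_n$ every RC-graph is forced into the size-$\binom{n}{2}$ staircase, so $\RC(w_0)$ is that single full staircase, and the step $\partial_r F_v=F_{vs_r}$ for a descent $r$ of $v$ then comes from explicit local moves on rows $r$ and $r+1$ of pipe dreams, as in Knutson--Miller's mitosis \cite{knutson2005grobner} or Bergeron and Billey's original argument \cite{bergeron1993RC} --- the same theorem, with heavier combinatorial bookkeeping.
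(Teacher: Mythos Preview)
The paper does not prove this theorem at all: it is quoted as background from \cite{bergeron1993RC} and used as a black box, so there is no ``paper's own proof'' to compare against. Your nilCoxeter/Fomin--Stanley argument is a correct and standard route to the result; the reduction to the single identity $\partial_r\bigl(A_r(x_r)A_{r+1}(x_{r+1})\bigr)=A_r(x_r)A_{r+1}(x_{r+1})u_r$ is valid for exactly the reasons you give, and the uniqueness argument via divided differences is clean. The alternative you sketch at the end --- checking $F_{w_0}$ directly and then running the divided-difference recursion combinatorially via local moves on pipe dreams --- is closer in spirit to what Bergeron and Billey actually do in \cite{bergeron1993RC}, whereas your main line follows the algebraic machinery of Fomin and Stanley; both are legitimate and well known.
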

As a result, the principal specialization of Schubert polynomials $\nu_w:=\S_w(1,\ldots,1)$ equals $\#\RC(w)$, the number of RC-graphs for $w$, which is the quantity that we are interested in.

An RC-graph $D$ may be viewed as a \textit{strand diagram} in the 2D grid $\Z_{>0}^2$ by placing a \textit{crossing} 
\begin{tikzpicture}[scale=0.15]
\draw (-1,0)--(1,0);
\draw (0,-1)--(0,1);
\end{tikzpicture}
at each $(i,j)\in D$ and placing an \textit{elbow}
\begin{tikzpicture}[scale=0.15]
\draw (-1,0) arc (270:360:1);
\draw (1,0) arc (90:180:1);
\end{tikzpicture}
(or \textit{non-crossing}) at each $(i,j)\notin D$. This procedure produces a pseudo-line arrangement that connects $(k,0)$ with $(0,w(k))$ where no two strands intersect more than once where $D\in\RC(w)$. For a clearer view of the local moves, we are also going to denote a crossing by $+$ and an elbow by $\cdot$. See Figure~\ref{fig:RCexample} for an example.

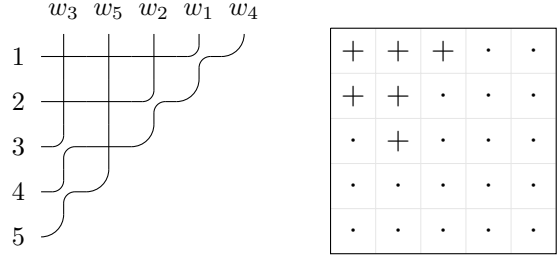
\begin{figure}[h!]
\begin{tikzpicture}[scale=0.300000000000000]
\draw(2,-3)--(4,-3);
\draw(3,-2)--(3,-4);
\draw(4,-3)--(6,-3);
\draw(5,-2)--(5,-4);
\draw(6,-3)--(8,-3);
\draw(7,-2)--(7,-4);
\draw(9,-2)--(9,-2.50000000000000);
\draw(9,-3.50000000000000)--(9,-4);
\draw(8,-3)--(8.50000000000000,-3);
\draw(9.50000000000000,-3)--(10,-3);
\draw(8.50000000000000,-3)arc(270:360:0.500000000000000);
\draw(9.50000000000000,-3)arc(90:180:0.500000000000000);
\draw(2,-5)--(4,-5);
\draw(3,-4)--(3,-6);
\draw(4,-5)--(6,-5);
\draw(5,-4)--(5,-6);
\draw(7,-4)--(7,-4.50000000000000);
\draw(7,-5.50000000000000)--(7,-6);
\draw(6,-5)--(6.50000000000000,-5);
\draw(7.50000000000000,-5)--(8,-5);
\draw(6.50000000000000,-5)arc(270:360:0.500000000000000);
\draw(7.50000000000000,-5)arc(90:180:0.500000000000000);
\draw(3,-6)--(3,-6.50000000000000);
\draw(3,-7.50000000000000)--(3,-8);
\draw(2,-7)--(2.50000000000000,-7);
\draw(3.50000000000000,-7)--(4,-7);
\draw(2.50000000000000,-7)arc(270:360:0.500000000000000);
\draw(3.50000000000000,-7)arc(90:180:0.500000000000000);
\draw(4,-7)--(6,-7);
\draw(5,-6)--(5,-8);
\draw(3,-8)--(3,-8.50000000000000);
\draw(3,-9.50000000000000)--(3,-10);
\draw(2,-9)--(2.50000000000000,-9);
\draw(3.50000000000000,-9)--(4,-9);
\draw(2.50000000000000,-9)arc(270:360:0.500000000000000);
\draw(3.50000000000000,-9)arc(90:180:0.500000000000000);
\draw(10,-3)arc(270:360:1);
\draw(8,-5)arc(270:360:1);
\draw(6,-7)arc(270:360:1);
\draw(4,-9)arc(270:360:1);
\draw(2,-11)arc(270:360:1);
\node at (1,-3) {$1$};
\node at (9,-1) {$w_{1}$};
\node at (1,-5) {$2$};
\node at (7,-1) {$w_{2}$};
\node at (1,-7) {$3$};
\node at (3,-1) {$w_{3}$};
\node at (1,-9) {$4$};
\node at (11,-1) {$w_{4}$};
\node at (1,-11) {$5$};
\node at (5,-1) {$w_{5}$};
\end{tikzpicture}
\qquad
\begin{tikzpicture}[scale=0.300000000000000]
\node at (3,-3) {\Large $+$};
\node at (5,-3) {\Large $+$};
\node at (7,-3) {\Large $+$};
\node at (9,-3) {\Large $\cdot$};
\node at (11,-3) {\Large $\cdot$};
\node at (3,-5) {\Large $+$};
\node at (5,-5) {\Large $+$};
\node at (7,-5) {\Large $\cdot$};
\node at (9,-5) {\Large $\cdot$};
\node at (11,-5) {\Large $\cdot$};
\node at (3,-7) {\Large $\cdot$};
\node at (5,-7) {\Large $+$};
\node at (7,-7) {\Large $\cdot$};
\node at (9,-7) {\Large $\cdot$};
\node at (11,-7) {\Large $\cdot$};
\node at (3,-9) {\Large $\cdot$};
\node at (5,-9) {\Large $\cdot$};
\node at (7,-9) {\Large $\cdot$};
\node at (9,-9) {\Large $\cdot$};
\node at (11,-9) {\Large $\cdot$};
\node at (3,-11) {\Large $\cdot$};
\node at (5,-11) {\Large $\cdot$};
\node at (7,-11) {\Large $\cdot$};
\node at (9,-11) {\Large $\cdot$};
\node at (11,-11) {\Large $\cdot$};
\draw[gray!20!white,ultra thin,step=2] (2,-2) grid (12,-12);
\draw(2,-2)--(12,-2)--(12,-12)--(2,-12)--(2,-2);
\end{tikzpicture}
\caption{An RC-graph for $w=43152$}
\label{fig:RCexample}
\end{figure}
We label the strands such that strand $k$ starts from the left of row $k$ and ends at the top of column $w(k)$. For a crossing $(i,j)\in D$, we say that it has \textit{type} $(a,b)$ if the two strands intersecting at $(i,j)$ are labeled with $a<b$. It is clear that all crossings of $D\in\RC(w)$ have different types and are in one-to-one correspondence with $\{(i,j)\ |\ i<j,\ w(i)>w(j)\}$, the inversions of $w$. 

There are distinguished RC-graphs of $w$. Recall that the \textit{Rothe diagram} of $w$ is the set
$$RD(w):=\{(i,j)\ |\ 1\leq i,j\leq n,\ w(i)>j,\ w^{-1}(j)>i\}.$$
The \textit{bottom RC-graph} of $w$ is the set of squares left-justified from $RD(w)$, which formally is $B_w:=\{(i,j)\ |\ j\leq\code(w)_i\}$ where $\code(w)$ is the \textit{Lehmar code} of $w$ defined via $\code(w)_i=\#\{j>i\ |\ w(j)<w(i)\}.$ Similarly, the \textit{top RC-graph} of $w$ is the set of sqaures top-adjusted from $RD(w)$, which is $T_w:=\{(i,j)\ |\ i\leq\code(w^{-1})_j\}$. An example is shown in Figure~\ref{fig:rotheexample}.
\begin{figure}[h!]
\centering
\begin{tikzpicture}[scale=0.300000000000000]
\draw(2,-2)--(14,-2)--(14,-14)--(2,-14)--(2,-2);
\draw[gray!20!white,ultra thin,step=2] (2,-2) grid (14,-14);
\node at (3,-3) {$\bullet$};
\draw(3,-14)--(3,-3)--(14,-3);
\node at (11,-5) {$\bullet$};
\draw(11,-14)--(11,-5)--(14,-5);
\node at (13,-7) {$\bullet$};
\draw(13,-14)--(13,-7)--(14,-7);
\node at (7,-9) {$\bullet$};
\draw(7,-14)--(7,-9)--(14,-9);
\node at (9,-11) {$\bullet$};
\draw(9,-14)--(9,-11)--(14,-11);
\node at (5,-13) {$\bullet$};
\draw(5,-14)--(5,-13)--(14,-13);
\fill[gray!50!white, draw=black] (4,-4) rectangle (6,-6);
\fill[gray!50!white, draw=black] (6,-4) rectangle (8,-6);
\fill[gray!50!white, draw=black] (8,-4) rectangle (10,-6);
\fill[gray!50!white, draw=black] (4,-6) rectangle (6,-8);
\fill[gray!50!white, draw=black] (6,-6) rectangle (8,-8);
\fill[gray!50!white, draw=black] (8,-6) rectangle (10,-8);
\fill[gray!50!white, draw=black] (4,-8) rectangle (6,-10);
\fill[gray!50!white, draw=black] (4,-10) rectangle (6,-12);
\end{tikzpicture}
\quad
\begin{tikzpicture}[scale=0.300000000000000]
\node at (3,-3) {\Large $\cdot$};
\node at (5,-3) {\Large $\cdot$};
\node at (7,-3) {\Large $\cdot$};
\node at (9,-3) {\Large $\cdot$};
\node at (11,-3) {\Large $\cdot$};
\node at (13,-3) {\Large $\cdot$};
\node at (3,-5) {\Large $+$};
\node at (5,-5) {\Large $+$};
\node at (7,-5) {\Large $+$};
\node at (9,-5) {\Large $\cdot$};
\node at (11,-5) {\Large $\cdot$};
\node at (13,-5) {\Large $\cdot$};
\node at (3,-7) {\Large $+$};
\node at (5,-7) {\Large $+$};
\node at (7,-7) {\Large $+$};
\node at (9,-7) {\Large $\cdot$};
\node at (11,-7) {\Large $\cdot$};
\node at (13,-7) {\Large $\cdot$};
\node at (3,-9) {\Large $+$};
\node at (5,-9) {\Large $\cdot$};
\node at (7,-9) {\Large $\cdot$};
\node at (9,-9) {\Large $\cdot$};
\node at (11,-9) {\Large $\cdot$};
\node at (13,-9) {\Large $\cdot$};
\node at (3,-11) {\Large $+$};
\node at (5,-11) {\Large $\cdot$};
\node at (7,-11) {\Large $\cdot$};
\node at (9,-11) {\Large $\cdot$};
\node at (11,-11) {\Large $\cdot$};
\node at (13,-11) {\Large $\cdot$};
\node at (3,-13) {\Large $\cdot$};
\node at (5,-13) {\Large $\cdot$};
\node at (7,-13) {\Large $\cdot$};
\node at (9,-13) {\Large $\cdot$};
\node at (11,-13) {\Large $\cdot$};
\node at (13,-13) {\Large $\cdot$};
\draw[gray!20!white,ultra thin,step=2] (2,-2) grid (14,-14);
\draw(2,-2)--(14,-2)--(14,-14)--(2,-14)--(2,-2);
\end{tikzpicture}
\quad
\begin{tikzpicture}[scale=0.300000000000000]
\node at (3,-3) {\Large $\cdot$};
\node at (5,-3) {\Large $+$};
\node at (7,-3) {\Large $+$};
\node at (9,-3) {\Large $+$};
\node at (11,-3) {\Large $\cdot$};
\node at (13,-3) {\Large $\cdot$};
\node at (3,-5) {\Large $\cdot$};
\node at (5,-5) {\Large $+$};
\node at (7,-5) {\Large $+$};
\node at (9,-5) {\Large $+$};
\node at (11,-5) {\Large $\cdot$};
\node at (13,-5) {\Large $\cdot$};
\node at (3,-7) {\Large $\cdot$};
\node at (5,-7) {\Large $+$};
\node at (7,-7) {\Large $\cdot$};
\node at (9,-7) {\Large $\cdot$};
\node at (11,-7) {\Large $\cdot$};
\node at (13,-7) {\Large $\cdot$};
\node at (3,-9) {\Large $\cdot$};
\node at (5,-9) {\Large $+$};
\node at (7,-9) {\Large $\cdot$};
\node at (9,-9) {\Large $\cdot$};
\node at (11,-9) {\Large $\cdot$};
\node at (13,-9) {\Large $\cdot$};
\node at (3,-11) {\Large $\cdot$};
\node at (5,-11) {\Large $\cdot$};
\node at (7,-11) {\Large $\cdot$};
\node at (9,-11) {\Large $\cdot$};
\node at (11,-11) {\Large $\cdot$};
\node at (13,-11) {\Large $\cdot$};
\node at (3,-13) {\Large $\cdot$};
\node at (5,-13) {\Large $\cdot$};
\node at (7,-13) {\Large $\cdot$};
\node at (9,-13) {\Large $\cdot$};
\node at (11,-13) {\Large $\cdot$};
\node at (13,-13) {\Large $\cdot$};
\draw[gray!20!white,ultra thin,step=2] (2,-2) grid (14,-14);
\draw(2,-2)--(14,-2)--(14,-14)--(2,-14)--(2,-2);
\end{tikzpicture}
\caption{The Rothe diagram $RD(w)$, bottom RC-graph $B_w$ and top RC-graph $T_w$ for $w=156342$.}
\label{fig:rotheexample}
\end{figure}
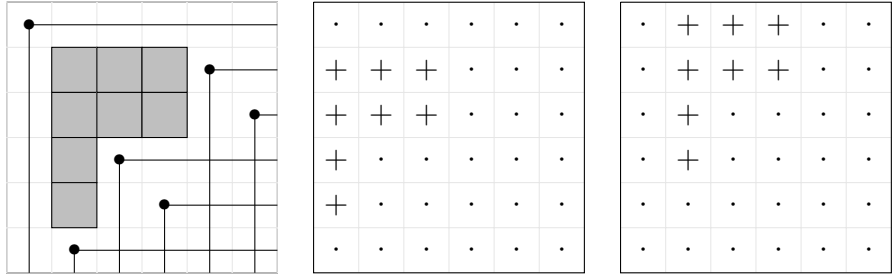

Bergeron and Billey \cite{bergeron1993RC} described the following local moves on $\RC(w)$, which are called \textit{ladder moves}.
\begin{definition}\label{def:ladder}
A \textit{ladder move} of $D\in\RC(w)$ at a crossing $(i,j)\in D$ of \textit{order} $k\geq0$ produces another RC-graph $D'=D\setminus\{(i,j)\}\cup\{(i{-}k{-}1,j{+}1)\}\in\RC(w)$ if the following conditions on $D$ are satisfied:
\begin{enumerate}
    \item $(i,j)\in D$, $(i,j{+}1),\ (i{-}k{-}1,j),\ (i{-}k{-}1,j{+}1)\notin D$;
    \item for all $i-k\leq i'<i$, $(i',j),(i',j{+}1)\in D$.
\end{enumerate}
\end{definition}
It is straightforward to observe that such a ladder move preserves the permutation and the reducedness. Visually, a ladder move is depicted in Figure~\ref{fig:ladderexample}.
\begin{figure}[h!]
\centering
\begin{tikzpicture}[scale=0.300000000000000]
\node at (3,-3) {\Large $\cdot$};
\node at (5,-3) {\Large $\cdot$};
\node at (3,-5) {\Large $+$};
\node at (5,-5) {\Large $+$};
\node at (3,-7) {\Large $\vdots$};
\node at (5,-7) {\Large $\vdots$};
\node at (3,-9) {\Large $\vdots$};
\node at (5,-9) {\Large $\vdots$};
\node at (3,-11) {\Large $+$};
\node at (5,-11) {\Large $+$};
\node at (3,-13) {\Large $+$};
\node at (5,-13) {\Large $\cdot$};
\draw[gray!20!white,ultra thin,step=2] (2,-2) grid (6,-14);
\draw(2,-2)--(6,-2)--(6,-14)--(2,-14)--(2,-2);
\draw [decorate,decoration={brace,amplitude=5pt}]
(2,-12) -- (2,-4) node [black,midway,xshift=-10pt] {$k$};
\node at (9,-8) {$\mapsto$};
\end{tikzpicture}
\quad
\begin{tikzpicture}[scale=0.300000000000000]
\node at (3,-3) {\Large $\cdot$};
\node at (5,-3) {\Large $+$};
\node at (3,-5) {\Large $+$};
\node at (5,-5) {\Large $+$};
\node at (3,-7) {\Large $\vdots$};
\node at (5,-7) {\Large $\vdots$};
\node at (3,-9) {\Large $\vdots$};
\node at (5,-9) {\Large $\vdots$};
\node at (3,-11) {\Large $+$};
\node at (5,-11) {\Large $+$};
\node at (3,-13) {\Large $\cdot$};
\node at (5,-13) {\Large $\cdot$};
\draw[gray!20!white,ultra thin,step=2] (2,-2) grid (6,-14);
\draw(2,-2)--(6,-2)--(6,-14)--(2,-14)--(2,-2);
\end{tikzpicture}
\caption{A ladder move of order $k$.}
\label{fig:ladderexample}
\end{figure}
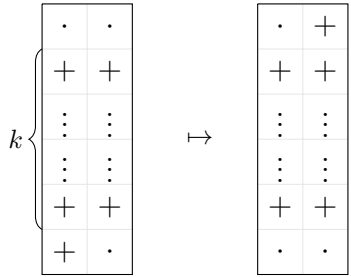

Let $D\mapsto D'$ be a ladder move that removes $(i,j)\in D$ and adds $(i{-}k{-}1,j{+}1)\in D'$. We see that if the crossing $(i,j)\in D$ is of type $(a,b)$, then $(i{-}k{-}1,j{+}1)\in D'$ is of type $(a,b)$ as well, while all other crossings have the same type as before.
\begin{theorem}[\cite{bergeron1993RC}]
Any RC-graph $D\in\RC(w)$ can be obtained from $B_w$ by a sequence of ladder moves.
\end{theorem}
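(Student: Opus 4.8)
The plan is to run a downward induction from an arbitrary $D\in\RC(w)$ back to $B_w$ using \emph{reverse} ladder moves, with the total column sum $N(D):=\sum_{(i,j)\in D}j$ as a monovariant. A ladder move sends a crossing $(i,j)\mapsto(i{-}k{-}1,j{+}1)$, hence increases $N$ by exactly $1$; so a reverse ladder move decreases $N$ by $1$, and since $N$ is a nonnegative integer it suffices to prove the following claim: if $D\in\RC(w)$ and $D\neq B_w$, then a reverse ladder move applies to $D$, i.e.\ there are integers $p\geq1$, $q\geq2$, $k\geq0$ with $(p,q)\in D$, with $(p,q{-}1),(p{+}k{+}1,q),(p{+}k{+}1,q{-}1)\notin D$, and with $(i',q{-}1),(i',q)\in D$ for all $p<i'\leq p{+}k$; the move then replaces $(p,q)$ by $(p{+}k{+}1,q{-}1)$ and the result again lies in $\RC(w)$ because ladder moves are reversible (the local picture merely slides a strand segment, so the "preserves the permutation and reducedness" observation above applies in both directions). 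Granting this claim, from any $D$ we apply reverse ladder moves repeatedly; $N$ strictly decreases each time, so the process terminates, and by the claim (contrapositive) it can only terminate at $B_w$; reversing the resulting chain $B_w=D_0,D_1,\dots,D_r=D$ exhibits $D$ as obtained from $B_w$ by ladder moves.

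To prove the claim, first note that $D\neq B_w$ forces $D$ not to be left-justified: a left-justified subset of $\Z_{>0}^2$ is determined by its column-count vector $(c_i)$, and if such a subset is an RC-graph of $w$ then its reading word $\prod_i(s_{i+c_i-1}\cdots s_i)$ is a reduced word for the permutation with Lehmer code $(c_i)$, forcing $c_i=\code(w)_i$ and hence $D=B_w$. So $D$ has an \emph{overhang}: a crossing $(i,j)\in D$ with $j\geq2$ and $(i,j{-}1)\notin D$. Pick an overhang $(p,q)$ with $q$ minimal and, among overhangs in column $q$, with $p$ maximal, and let $k\geq0$ be maximal with $(p{+}t,q),(p{+}t,q{-}1)\in D$ for all $1\leq t\leq k$. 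All conditions of the claim except $(p{+}k{+}1,q)\notin D$ and $(p{+}k{+}1,q{-}1)\notin D$ then hold by construction, and by maximality of $k$ at least one of these two does hold; it remains to exclude the two mixed cases. If $(p{+}k{+}1,q)\in D$ and $(p{+}k{+}1,q{-}1)\notin D$, then $(p{+}k{+}1,q)$ is an overhang in column $q$ strictly below $(p,q)$, contradicting the choice of $p$. If $(p{+}k{+}1,q)\notin D$ and $(p{+}k{+}1,q{-}1)\in D$, read off the subsequence of the reading word of $D$ coming from the crossings $(p,q)$, then $(p{+}t,q)$ and $(p{+}t,q{-}1)$ for $t=1,\dots,k$, then $(p{+}k{+}1,q{-}1)$ (in this order, by the row-then-column reading convention): with $m:=p{+}q{-}1$ it equals
\[
s_m\,(s_{m+1}s_m)(s_{m+2}s_{m+1})\cdots(s_{m+k}s_{m+k-1})\,s_{m+k},
\]
which has $2k{+}2$ letters but, by repeated use of $s_as_{a+1}s_a=s_{a+1}s_as_{a+1}$ and of commutations $s_as_b=s_bs_a$ for $|a-b|\geq2$, collapses to a word of length at most $2k$; since a subword of a reduced word is reduced, this contradicts $D\in\RC(w)$. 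Hence both conditions hold, a reverse ladder move applies at $(p,q)$, and the claim is proved.

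The main obstacle is this claim, and within it the exclusion of the mixed case $(p{+}k{+}1,q)\notin D$, $(p{+}k{+}1,q{-}1)\in D$ — equivalently, the assertion that the displayed $(2k{+}2)$-letter word is never reduced (for $k=0$ this is just the non-reduced word $s_ms_m$, and the general case follows from the braid relations as indicated). The roles of the extremal choices are exactly to drive the other two cases to contradictions: minimality of $q$ is not actually needed beyond guaranteeing $q\geq2$ once we have an overhang, while maximality of $p$ is what kills the case $(p{+}k{+}1,q)\in D$, $(p{+}k{+}1,q{-}1)\notin D$. The remaining ingredients — the monovariant bookkeeping, the reversibility of ladder moves, and the identification of left-justified RC-graphs with $B_w$ — are routine.
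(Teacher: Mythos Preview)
The paper does not prove this statement; it is quoted from Bergeron--Billey without proof. Your argument follows the standard reverse-ladder-move strategy and its overall structure is sound, but there is a genuine gap in your exclusion of the mixed case $(p{+}k{+}1,q)\notin D$, $(p{+}k{+}1,q{-}1)\in D$. The assertion ``a subword of a reduced word is reduced'' is simply false: $s_1s_2s_1$ is reduced in $S_3$, yet its subword $s_1s_1$ is not. Your displayed $(2k{+}2)$-letter word is only a \emph{subsequence} of the reading word of $D$ --- there may well be other crossings in rows $p,\dots,p{+}k{+}1$ outside columns $q{-}1,q$ interleaved with it --- so its non-reducedness does not by itself contradict $D\in\RC(w)$.

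The clean fix is to argue with strands rather than words. Trace the strand entering the bottom of $(p{+}k{+}1,q{-}1)$: it rises vertically through the crossings in column $q{-}1$, turns right at the elbow $(p,q{-}1)$, and then passes horizontally through the crossing $(p,q)$. Now trace the strand entering the left of $(p{+}k{+}1,q{-}1)$: it passes horizontally through that crossing, turns upward at the elbow $(p{+}k{+}1,q)$, and rises vertically through the crossings in column $q$, in particular through $(p,q)$. These two strands therefore meet at both $(p{+}k{+}1,q{-}1)$ and $(p,q)$, contradicting the ``no two strands cross twice'' characterization of reducedness. With this replacement your proof is complete.
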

We adopt the following definition from \cite{weigandt2018schubert}.
\begin{definition}\label{def:label}
For $D\in\RC(w)$, let its \textit{label} $\alpha(D)$ be a vector such that $$\alpha(D)_k:=\{(i,j)\in D\ |\ i+j-1=k\}.$$
\end{definition}
The label $\alpha(D)$ records the number of crossings on each diagonal. We say that a crossing $(i,j)\in D$ is on diagonal $i+j-1$ and denote it by $\diag((i,j))=i+j-1$.

A ladder move is called \textit{simple} if its order is 0, and a ladder move is \textit{non-simple} if its order is positive. It is then clear that simple ladder moves do not change the label of RC-graphs, while non-simple ladder moves increase the label lexicographically. 

\section{The main theorem}\label{sec:main}
\begin{theorem}\label{thm:main}
For $w\in S_\infty$,
$\S_w(1)\geq 1+p_{132}(w)+p_{1432}(w).$
\end{theorem}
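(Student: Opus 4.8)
The plan is to use $\S_w(1)=\#\RC(w)$ and to produce $1+p_{132}(w)+p_{1432}(w)$ pairwise distinct RC-graphs, sorted by their labels. I would collect the single RC-graph $B_w$ together with $p_{132}(w)$ further RC-graphs of label $\alpha(B_w)$ into one pile, and $p_{1432}(w)$ RC-graphs whose labels are lexicographically larger than $\alpha(B_w)$ into a second pile. Since a simple ladder move preserves the label while a non-simple one makes it lexicographically larger, the two piles are automatically disjoint, so Theorem~\ref{thm:main} reduces to the two inequalities
\begin{gather*}
\#\{D\in\RC(w):\alpha(D)=\alpha(B_w)\}\ \geq\ 1+p_{132}(w),\\
\#\{D\in\RC(w):\alpha(D)>_{\mathrm{lex}}\alpha(B_w)\}\ \geq\ p_{1432}(w).
\end{gather*}

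For the first inequality --- which recovers Weigandt's bound --- I would take any sequence of simple ladder moves from $B_w$ to the top RC-graph $T_w$. Such a sequence exists: any RC-graph comes from $B_w$ by ladder moves, each ladder move weakly increases the label, so reaching an RC-graph of label $\alpha(B_w)$ forces every move along the way to be simple; and $\alpha(T_w)=\alpha(B_w)$. Since a simple ladder move raises exactly one crossing by exactly one row, any such sequence has length equal to the total vertical displacement $d$ between $B_w$ and $T_w$, and therefore passes through $1+d$ distinct RC-graphs of label $\alpha(B_w)$. Now the crossing of type $(i,j)$ (the one corresponding to the inversion $i<j$ with $w(i)>w(j)$) lies in row $i$ in $B_w$ and in row $1+\#\{p<i:w(p)>w(j)\}$ in $T_w$, so
\begin{align*}
d&=\sum_{i<j,\ w(i)>w(j)}\Bigl(i-1-\#\{p<i:w(p)>w(j)\}\Bigr)\\
&=\sum_{i<j,\ w(i)>w(j)}\#\{p<i:w(p)<w(j)\}\ =\ p_{132}(w),
\end{align*}
the last equality because a triple $p<i<j$ with $w(p)<w(j)<w(i)$ is precisely an occurrence of $132$.

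For the second inequality I would attach to each occurrence of $1432$ at positions $a<b<c<d$ (so $w(a)<w(d)<w(c)<w(b)$) an RC-graph $\psi(a,b,c,d)$ built from $B_w$ by simple ladder moves, then one non-simple ladder move, then more simple ladder moves, so as to realize the following local picture. Strand $b$ crosses both strand $c$ and strand $d$, producing crossings of types $(b,c)$ and $(b,d)$; using simple moves, bring these two into a horizontally adjacent pair inside a single row, and bring the crossing of type $(c,d)$ into the cell directly beneath the left member of that pair, with the cell to its right and the two cells directly above the pair empty. A non-simple ladder move then lifts the crossing of type $(c,d)$ over the pair, strictly lowering its diagonal, so $\alpha(\psi(a,b,c,d))>_{\mathrm{lex}}\alpha(B_w)$; finally, raise that crossing by simple moves into row $a$. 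Distinct occurrences produce distinct RC-graphs: in $\psi(a,b,c,d)$ the crossing of type $(c,d)$ is the unique crossing whose diagonal is smaller than it is in any label-$\alpha(B_w)$ RC-graph, so one recovers $(c,d)$ as its type, $a$ as the row it now occupies, and $b$ from the adjacent pair it was lifted past. Combined with the disjointness from the first pile, this yields $p_{1432}(w)$ more distinct RC-graphs.

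The step I expect to be the crux is making the construction of $\psi$ rigorous --- showing it is well defined and injective. One must check that (i) the preliminary simple ladder moves can always create the required configuration (an adjacent pair of crossings of strand $b$, the crossing of type $(c,d)$ directly beneath, the relevant cells empty), where the inequalities $w(a)<w(d)<w(c)<w(b)$ enter essentially, as they are what prevent other strands from interfering; (ii) the final lift into row $a$ is always available; and (iii) the crossings shuffled around as scaffolding are never confused with the unique crossing that changes diagonal, so the recovery of $(a,b,c,d)$ is unambiguous. Once (i)--(iii) are in place, the label bookkeeping of the first paragraph and the displacement identity of the second are routine.
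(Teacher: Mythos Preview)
Your first pile is handled correctly, and the displacement computation $d=p_{132}(w)$ is a clean reworking of Weigandt's argument.

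The second pile, however, has a genuine gap, and it is precisely the one the paper warns about in its Remark after Example~\ref{ex:1432}. Your construction of $\psi(a,b,c,d)$ places the single crossing of type $(c,d)$ beneath a \emph{pair} of crossings and lifts it with one non-simple ladder move; that is an order-$1$ ladder move. But order-$0$ and order-$1$ ladder moves together do not produce $p_{1432}(w)$ RC-graphs with label $\neq\alpha(B_w)$ in general. For $w=14532$ one has $p_{1432}(w)=2$, yet starting from $B_w$ only \emph{one} RC-graph with a different label is reachable by moves of order $\leq 1$. Concretely, the two $1432$-occurrences are at positions $(1,2,4,5)$ and $(1,3,4,5)$; for the first, the crossings of types $(3,4),(3,5)$ sit between your $b$-pair and the $(c,d)$-crossing and cannot be simply moved aside while keeping the pair fixed, so step~(i) fails; for the second, the order-$1$ move can be performed, but then the lift into row $a=1$ in step~(ii) is blocked by the displaced $(2,4),(2,5)$-crossings. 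Even setting aside well-definedness, your injectivity scheme recovers only $(a,c,d)$ intrinsically (the moved crossing and its final row), and here both occurrences share $(a,c,d)=(1,4,5)$.

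The paper's proof takes a different organization that sidesteps this. It does not index by $1432$-occurrences at all, but by boxes $(i,j)\in RD(w)$, using the identity $p_{1432}(w)=\sum_{(i,j)\in RD(w)}\#A_{(i,j)}\cdot\#C_{(i,j)}$. For each such box it first performs a carefully designed sequence of simple moves to assemble an $(r{+}1)\times(m{+}1)$ rectangle of crossings (minus one corner), using only the \emph{smallest} element $c_1\in C_{(i,j)}$; it then applies an entire row of ladder moves of order $r$ (with $r$ possibly large), so that several crossings change diagonal simultaneously. The parameters $a'<\#A_{(i,j)}$ and $c'<\#C_{(i,j)}$ are encoded afterwards by further simple moves. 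The recovery of $(i,j)$ reads off the whole row of diagonally-shifted crossings, not a single one. The upshot is that higher-order ladder moves and a box-based (rather than occurrence-based) bookkeeping are essential; your order-$1$ scheme cannot be patched without changing to something of this shape.
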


Theorem~\ref{thm:main} strengthens the following theorem by A. Weigandt.
\begin{theorem}\cite{weigandt2018schubert}\label{thm:weigandt}
For $w\in S_{\infty}$, $\S_w(1)\geq 1+p_{132}(w).$
\end{theorem}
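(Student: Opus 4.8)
The plan is to use the identity $\S_w(1)=\#\RC(w)$ from the excerpt and to produce at least $1+p_{132}(w)$ distinct RC-graphs directly. The summand $1$ is supplied by the bottom RC-graph $B_w\in\RC(w)$. For the remaining $p_{132}(w)$, I would construct an injection $\iota$ from the set of occurrences of $132$ in $w$ into $\RC(w)\setminus\{B_w\}$. Since every graph in the image of $\iota$ is obtained from $B_w$ by at least one ladder move and hence differs from $B_w$, injectivity of $\iota$ gives $\#\RC(w)\ge 1+p_{132}(w)$, which is the desired bound.

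To define $\iota$, fix an occurrence $a_1<a_2<a_3$ with $w(a_1)<w(a_3)<w(a_2)$. The pair $(a_2,a_3)$ is an inversion, so by the correspondence between crossings and inversions it is recorded by a unique crossing of type $(a_2,a_3)$ in $B_w$, which lies in row $a_2$; call its position $(a_2,c)$. The idea is to apply a ladder move, or a short canonically determined sequence of ladder moves, that raises this crossing out of row $a_2$ and onto a strictly higher diagonal, while keeping its type equal to $(a_2,a_3)$ by the type-preservation observation following Definition~\ref{def:ladder}. The inequalities $a_1<a_2$ and $w(a_1)<w(a_3)$ are exactly what guarantee that the rows between $a_1$ and $a_2$ carry enough crossings in the two relevant columns to satisfy condition~(2) of Definition~\ref{def:ladder}, so that the prescribed move (of order determined by the occurrence) is admissible.

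For injectivity I would recover the occurrence from $D=\iota(a_1,a_2,a_3)$ by comparing $D$ with $B_w$ and reading off the crossing that has been displaced to a higher diagonal: its type returns the inversion $(a_2,a_3)$, and its new position (together with the amount of ascent, measured against $\diag$) returns $a_1$. Tracking the label $\alpha(D)$ from Definition~\ref{def:label} will organize this bookkeeping, since simple ladder moves fix $\alpha$ while non-simple ones raise it lexicographically, so the diagonal on which the displaced crossing lands is controlled by the order of the move used.

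The hard part will be making $\iota$ well-defined when there are crossings to the right of $(a_2,c)$ in row $a_2$: such crossings block a single ladder move, so raising the type-$(a_2,a_3)$ crossing may first require relocating the intervening crossings upward, and several crossings can end up moved at once. The recipe must therefore be specified canonically, independently of which occurrence is chosen among those sharing the inversion $(a_2,a_3)$, it must terminate inside $\RC(w)$, and the output $D$ must still determine $(a_2,a_3)$ and $a_1$ unambiguously from the symmetric difference $D\triangle B_w$. Controlling this interaction—so that distinct occurrences are never collapsed—is the crux of the argument, and is precisely where the diagonal data and the lexicographic monotonicity of $\alpha$ must be leveraged.
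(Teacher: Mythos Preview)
Your proposal has a genuine gap: the mechanism you propose for producing and distinguishing the RC-graphs cannot work. You want to raise the type-$(a_2,a_3)$ crossing onto a \emph{different} diagonal via a ladder move whose order is dictated by $a_1$, and then recover $(a_1,a_2,a_3)$ from the diagonal shift. But the assertion that ``$a_1<a_2$ and $w(a_1)<w(a_3)$ guarantee that the rows between $a_1$ and $a_2$ carry enough crossings in the two relevant columns to satisfy condition~(2) of Definition~\ref{def:ladder}'' is false. The presence of a small value $w(a_1)$ above row $a_2$ tends to \emph{empty} rows of $B_w$, not fill them: already for $w=132$ one has $B_w=\{(2,1)\}$, row~$1$ is empty, and the only available ladder move is simple, leaving the crossing on diagonal~$2$. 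More decisively, by Theorem~\ref{thm:1432} every $1432$-avoiding $w$ has all of its RC-graphs connected by simple ladder moves, hence all sharing the label $\alpha(B_w)$; no crossing ever changes diagonal. Such $w$ may still contain arbitrarily many $132$ patterns (take $w=1,n,2,3,\ldots,n{-}1$), so no invariant built from diagonal shifts can separate them.

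The approach the paper cites from Weigandt is essentially the opposite of yours: one uses \emph{only} simple ladder moves. The chain of RC-graphs from $B_w$ to $T_w$ obtained by simple ladder moves contains exactly $1+p_{132}(w)$ elements, all with the common label $\alpha(B_w)$; the $132$ patterns correspond to the steps of this chain rather than to diagonal data. Non-simple ladder moves are precisely the additional tool needed to go beyond this and account for $p_{1432}(w)$ in Theorem~\ref{thm:main}, which is why the blocking phenomenon you identify in your last paragraph is the crux of \emph{that} argument, not of Theorem~\ref{thm:weigandt}.
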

Let's quickly review the proof idea of Theorem~\ref{thm:weigandt}: Weigandt \cite{weigandt2018schubert} showed that the top RC-graph $T_w$ and the bottom RC-graph $B_w$ are connected by a sequence of simple ladder moves and any such a chain of RC-graphs contains exactly $1+p_{132}(w)$ RC-graphs of $w$. Since simple ladder moves don't change the label of RC-graphs, to prove Theorem~\ref{thm:main}, it suffices to construct $p_{1432}(w)$ RC-graphs with a different label than $\alpha(B_w)$.

\begin{example}\label{ex:1432}
Figure~\ref{fig:1432} shows the Rothe diagram and all RC-graphs of the permutation $w=1432$. We see that $\#\RC(w)=5$, while $1+p_{132}(w)=4$, accounting for the first 4 RC-graphs (from left to right) listed in Figure~\ref{fig:1432}. The left most RC-graph is $B_w$ and the fourth is $T_w$. These four RC-graphs can be obtained from the previous one by a simple ladder move, while the fifth RC-graph is obtained from $B_w$ by a ladder move of order 1.
\begin{figure}[h!]
\centering
\begin{tikzpicture}[scale=0.200000000000000]
\draw(2,-2)--(10,-2)--(10,-10)--(2,-10)--(2,-2);
\draw[gray!20!white,ultra thin,step=2] (2,-2) grid (10,-10);
\node at (3,-3) {\footnotesize $\bullet$};
\draw(3,-10)--(3,-3)--(10,-3);
\node at (9,-5) {\footnotesize $\bullet$};
\draw(9,-10)--(9,-5)--(10,-5);
\node at (7,-7) {\footnotesize $\bullet$};
\draw(7,-10)--(7,-7)--(10,-7);
\node at (5,-9) {\footnotesize $\bullet$};
\draw(5,-10)--(5,-9)--(10,-9);
\fill[gray!50!white, draw=black] (4,-4) rectangle (6,-6);
\fill[gray!50!white, draw=black] (6,-4) rectangle (8,-6);
\fill[gray!50!white, draw=black] (4,-6) rectangle (6,-8);
\end{tikzpicture}
\qquad
\begin{tikzpicture}[scale=0.200000000000000]
\node at (3,-3) {\normalsize $\cdot$};
\node at (5,-3) {\normalsize $\cdot$};
\node at (7,-3) {\normalsize $\cdot$};
\node at (3,-5) {\normalsize $+$};
\node at (5,-5) {\normalsize $+$};
\node at (7,-5) {\normalsize $\cdot$};
\node at (3,-7) {\normalsize $+$};
\node at (5,-7) {\normalsize $\cdot$};
\node at (7,-7) {\normalsize $\cdot$};
\draw[gray!20!white,ultra thin,step=2] (2,-2) grid (8,-8);
\draw(2,-2)--(8,-2)--(8,-8)--(2,-8)--(2,-2);
\end{tikzpicture}
\quad
\begin{tikzpicture}[scale=0.200000000000000]
\node at (3,-3) {\normalsize $\cdot$};
\node at (5,-3) {\normalsize $\cdot$};
\node at (7,-3) {\normalsize $+$};
\node at (3,-5) {\normalsize $+$};
\node at (5,-5) {\normalsize $\cdot$};
\node at (7,-5) {\normalsize $\cdot$};
\node at (3,-7) {\normalsize $+$};
\node at (5,-7) {\normalsize $\cdot$};
\node at (7,-7) {\normalsize $\cdot$};
\draw[gray!20!white,ultra thin,step=2] (2,-2) grid (8,-8);
\draw(2,-2)--(8,-2)--(8,-8)--(2,-8)--(2,-2);
\end{tikzpicture}
\quad
\begin{tikzpicture}[scale=0.200000000000000]
\node at (3,-3) {\normalsize $\cdot$};
\node at (5,-3) {\normalsize $+$};
\node at (7,-3) {\normalsize $+$};
\node at (3,-5) {\normalsize $\cdot$};
\node at (5,-5) {\normalsize $\cdot$};
\node at (7,-5) {\normalsize $\cdot$};
\node at (3,-7) {\normalsize $+$};
\node at (5,-7) {\normalsize $\cdot$};
\node at (7,-7) {\normalsize $\cdot$};
\draw[gray!20!white,ultra thin,step=2] (2,-2) grid (8,-8);
\draw(2,-2)--(8,-2)--(8,-8)--(2,-8)--(2,-2);
\end{tikzpicture}
\quad
\begin{tikzpicture}[scale=0.200000000000000]
\node at (3,-3) {\normalsize $\cdot$};
\node at (5,-3) {\normalsize $+$};
\node at (7,-3) {\normalsize $+$};
\node at (3,-5) {\normalsize $\cdot$};
\node at (5,-5) {\normalsize $+$};
\node at (7,-5) {\normalsize $\cdot$};
\node at (3,-7) {\normalsize $\cdot$};
\node at (5,-7) {\normalsize $\cdot$};
\node at (7,-7) {\normalsize $\cdot$};
\draw[gray!20!white,ultra thin,step=2] (2,-2) grid (8,-8);
\draw(2,-2)--(8,-2)--(8,-8)--(2,-8)--(2,-2);
\end{tikzpicture}
\quad
\begin{tikzpicture}[scale=0.200000000000000]
\node at (3,-3) {\normalsize $\cdot$};
\node at (5,-3) {\normalsize $+$};
\node at (7,-3) {\normalsize $\cdot$};
\node at (3,-5) {\normalsize $+$};
\node at (5,-5) {\normalsize $+$};
\node at (7,-5) {\normalsize $\cdot$};
\node at (3,-7) {\normalsize $\cdot$};
\node at (5,-7) {\normalsize $\cdot$};
\node at (7,-7) {\normalsize $\cdot$};
\draw[gray!20!white,ultra thin,step=2] (2,-2) grid (8,-8);
\draw(2,-2)--(8,-2)--(8,-8)--(2,-8)--(2,-2);
\end{tikzpicture}
\caption{The Rothe diagram and all RC-graphs of $w=1432$.}
\label{fig:1432}
\end{figure}
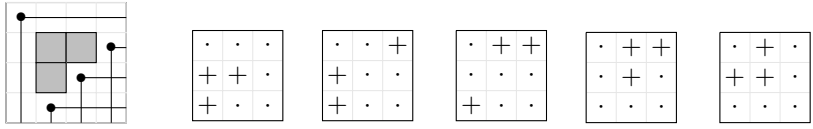
\end{example}

\begin{remark}
It is not true that ladder moves of order 1 (together with simple ladder moves) can produce $p_{1432}(w)$ many RC-graphs. To prove Theorem~\ref{thm:main}, we have to utilize ladder moves of high orders. For example, if $w=14532$, then $p_{1432}(w)=2$. But starting from $B_w$ and applying any sequence of ladder moves of order 0 or 1, we can only obtain one RC-graph (the right of Figure~\ref{fig:14532}) that has a different label than $B_w$.
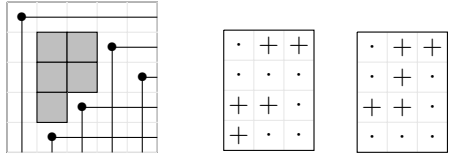
\begin{figure}[h!]
\centering
\begin{tikzpicture}[scale=0.200000000000000]
\draw(2,-2)--(12,-2)--(12,-12)--(2,-12)--(2,-2);
\draw[gray!20!white,ultra thin,step=2] (2,-2) grid (12,-12);
\node at (3,-3) {\footnotesize $\bullet$};
\draw(3,-12)--(3,-3)--(12,-3);
\node at (9,-5) {\footnotesize $\bullet$};
\draw(9,-12)--(9,-5)--(12,-5);
\node at (11,-7) {\footnotesize $\bullet$};
\draw(11,-12)--(11,-7)--(12,-7);
\node at (7,-9) {\footnotesize $\bullet$};
\draw(7,-12)--(7,-9)--(12,-9);
\node at (5,-11) {\footnotesize $\bullet$};
\draw(5,-12)--(5,-11)--(12,-11);
\fill[gray!50!white, draw=black] (4,-4) rectangle (6,-6);
\fill[gray!50!white, draw=black] (6,-4) rectangle (8,-6);
\fill[gray!50!white, draw=black] (4,-6) rectangle (6,-8);
\fill[gray!50!white, draw=black] (6,-6) rectangle (8,-8);
\fill[gray!50!white, draw=black] (4,-8) rectangle (6,-10);
\end{tikzpicture}
\qquad
\begin{tikzpicture}[scale=0.200000000000000]
\node at (3,-3) {\normalsize $\cdot$};
\node at (5,-3) {\normalsize $+$};
\node at (7,-3) {\normalsize $+$};
\node at (3,-5) {\normalsize $\cdot$};
\node at (5,-5) {\normalsize $\cdot$};
\node at (7,-5) {\normalsize $\cdot$};
\node at (3,-7) {\normalsize $+$};
\node at (5,-7) {\normalsize $+$};
\node at (7,-7) {\normalsize $\cdot$};
\node at (3,-9) {\normalsize $+$};
\node at (5,-9) {\normalsize $\cdot$};
\node at (7,-9) {\normalsize $\cdot$};
\draw[gray!20!white,ultra thin,step=2] (2,-2) grid (8,-10);
\draw(2,-2)--(8,-2)--(8,-10)--(2,-10)--(2,-2);
\end{tikzpicture}
\quad
\begin{tikzpicture}[scale=0.200000000000000]
\node at (3,-3) {\normalsize $\cdot$};
\node at (5,-3) {\normalsize $+$};
\node at (7,-3) {\normalsize $+$};
\node at (3,-5) {\normalsize $\cdot$};
\node at (5,-5) {\normalsize $+$};
\node at (7,-5) {\normalsize $\cdot$};
\node at (3,-7) {\normalsize $+$};
\node at (5,-7) {\normalsize $+$};
\node at (7,-7) {\normalsize $\cdot$};
\node at (3,-9) {\normalsize $\cdot$};
\node at (5,-9) {\normalsize $\cdot$};
\node at (7,-9) {\normalsize $\cdot$};
\draw[gray!20!white,ultra thin,step=2] (2,-2) grid (8,-10);
\draw(2,-2)--(8,-2)--(8,-10)--(2,-10)--(2,-2);
\end{tikzpicture}
\caption{The Rothe diagram and the only ladder move of order 1 for $w=14532$.}
\label{fig:14532}
\end{figure}
\end{remark}

For each $(i,j)\in RD(w)$, let $A_{(i,j)}:=\{a{<}i\ |\ w(a){<}j\}$ and $C_{(i,j)}:=\{i{<}c{<}w^{-1}(j)\ |\ j{<}w(c){<}w(i)\}$. Pictorially in the Rothe diagram, for the box $(i,j)\in RD(w)$, $A_{(i,j)}$ is the set of dots (permutation entries) to the top left of $(i,j)$ and $C_{(i,j)}$ is the set of dots inside the rectangle to the bottom right of $(i,j)$ with vertices at $(i,w(i))$ and $(w^{-1}(j),j)$.

After left-justification, $(i,j)\in RD(w)$ becomes $(i,j-\# A_{(i,j)})\in B_w$. Denote this map by $\beta:RD(w)\rightarrow B_w$. If $(i,j_1)$ and $(i,j_2)$, two boxes in the same row of $RD(w)$ with $j_1<j_2$, have no squares in between, then they become adjacent after left-justification in $B_w$. Notice that $\diag(\beta(i,j))=i+j-\# A_{(i,j)}-1$. This expression is symmetric in $i$ and $j$. If $(i_1,j)$ and $(i_2,j)$, two boxes in the same column of $RD(w)$ with $i_1<i_2$, have no squares in between, then we similarly have $\diag(\beta(i_1,j))+1=\diag(\beta(i_2,j))$. This is because $\# A_{(i_2,j)}-\# A_{(i_1,j)}$ must be $i_2-i_1-1$ for there to be no squares left in between of $(i_1,j)$ and $(i_2,j)$ in $RD(w)$.
\begin{lemma}\label{lem:diagdiff}
Let $(i,j),(i',j')\in RD(w)$ such that $i<i'$ and $j<j'$, then $\diag(\beta(i',j'))-\diag(\beta(i,j))\geq2$.
\end{lemma}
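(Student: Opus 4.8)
The plan is to reduce the statement to a counting inequality about the sets $A_{(i,j)}$, via the identity $\diag(\beta(i,j)) = i + j - \#A_{(i,j)} - 1$ recorded above. Since
\[
\diag(\beta(i',j')) - \diag(\beta(i,j)) = (i'-i) + (j'-j) - \bigl(\#A_{(i',j')} - \#A_{(i,j)}\bigr),
\]
the lemma is equivalent to $\#A_{(i',j')} - \#A_{(i,j)} \leq (i'-i) + (j'-j) - 2$. Because $i<i'$ and $j<j'$, any $a<i$ with $w(a)<j$ automatically satisfies $a<i'$ and $w(a)<j'$, so $A_{(i,j)}\subseteq A_{(i',j')}$ and it suffices to bound $\#\bigl(A_{(i',j')}\setminus A_{(i,j)}\bigr)$.

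First I would split this set difference into $X=\{a: a<i,\ j\leq w(a)<j'\}$ and $Y=\{a: i\leq a<i',\ w(a)<j'\}$; these are disjoint and exhaust $A_{(i',j')}\setminus A_{(i,j)}$. Then I would bound each piece with an injection: $a\mapsto a$ sends $Y$ into $\{i,\ldots,i'-1\}$, so $\#Y\leq i'-i$; and $a\mapsto w(a)$ sends $X$ into $\{j,\ldots,j'-1\}$, but since $(i,j)\in RD(w)$ gives $w^{-1}(j)>i$, no $a<i$ has $w(a)=j$, so in fact $X$ injects into $\{j+1,\ldots,j'-1\}$ and $\#X\leq j'-j-1$. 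This already yields $\#X+\#Y\leq (i'-i)+(j'-j)-1$, which is one short of what we need.

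Recovering the final unit is the crux of the argument, and I expect it to be the only real obstacle. The trick is to look at the dot in row $i$: we have $w(i)>j$ since $(i,j)\in RD(w)$, and $w(i)\neq j'$ since $(i',j')\in RD(w)$ forces $w^{-1}(j')>i'>i$. If $w(i)<j'$, then $w(i)\in\{j+1,\ldots,j'-1\}$ and, as every $a\in X$ has $a<i$ and hence $w(a)\neq w(i)$, the injection of $X$ misses the value $w(i)$, so $\#X\leq j'-j-2$. If instead $w(i)>j'$, then $a=i$ fails the condition $w(a)<j'$, so $i\notin Y$ and $Y$ injects into $\{i+1,\ldots,i'-1\}$, giving $\#Y\leq i'-i-1$. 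In both cases $\#X+\#Y\leq (i'-i)+(j'-j)-2$, and substituting into the displayed identity for $\diag(\beta(\cdot))$ completes the proof; the degenerate cases $i'=i+1$ and $j'=j+1$ need no separate treatment, since e.g.\ $j'=j+1$ combined with $w(i)>j$ automatically puts us in the second case.
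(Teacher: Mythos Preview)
Your argument is correct. It differs from the paper's route: rather than bounding $\#A_{(i',j')}-\#A_{(i,j)}$ directly, the paper passes to the complementary count, rewriting
\[
\diag(\beta(i,j)) \;=\; n-1-\#\{a : a>i,\ w(a)>j\}
\]
(this uses $(i,j)\in RD(w)$, so that $w(i)>j$ and $w^{-1}(j)>i$, to make the inclusion--exclusion exact). The lemma then reduces to the single containment
\[
\{a : a>i',\ w(a)>j'\}\;\sqcup\;\{\,i',\ w^{-1}(j')\,\}\ \subseteq\ \{a : a>i,\ w(a)>j\},
\]
which is immediate and needs no case analysis. Your approach instead extracts the two units of slack from the dot in column $j$ (shrinking the target of $X$) and the dot in row $i$ (via the dichotomy on whether $w(i)<j'$ or $w(i)>j'$); the paper uses the dots in row $i'$ and column $j'$ as explicit witnesses in the complement. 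The complement reformulation is a bit slicker for sidestepping the case split, but both proofs are short and elementary, and yours has the minor virtue of never invoking an ambient $n$.
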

\begin{proof}
For $(i,j)\in RD(w)$, $w(i)>j$ and $w^{-1}(j)>i$ so
\begin{align*}
\diag(\beta(i,j))=&i+j-\#A_{(i,j)}-1\\
=&\#\{(a,w(a))\ |\ a\leq i\}+\#\{(a,w(a))\ |\ w(a)\leq j\}\\
&-\#\{(a,w(a))\ |\ a<i,w(a)<j\}-1\\
=&\#\{(a,w(a))\ |\ a\leq i\text{ or }w(a)\leq j\}-1\\
=&n-1-\#\{(a,w(a))\ |\ a>i,\ w(a)>j\}.
\end{align*}
If $i'>i$ and $j'>j$, then $\{(a,w(a))\ |\ a>i,w(a)>j\}$ contains $\{(a,w(a))\ |\ a>i',w(a)>j'\}$ disjoint union with $\{(i,w(i)),(w^{-1}(j),j)\}$ so $\diag(\beta(i',j'))-\diag(\beta(i,j))\geq2$.
\end{proof}
Intuitively, Lemma~\ref{lem:diagdiff} is saying that if a box $(i',j')$ lies strictly to the bottom right of $(i,j)$ in $RD(w)$, then after left-justification to $B_w$, they won't interact with each other when applying simple ladder moves.

\begin{proof}[Proof of Theorem~\ref{thm:main}]
Consider the following calculation
\begin{align*}
p_{1432}(w)=&\sum_{b<d,w(b)>w(d)}\#\{a{<}b\ |\ w(a){<}w(d)\}\cdot\#\{b{<}c{<}d\ |\ w(d){<}w(c){<}w(b)\}\\
=&\sum_{(i,j)\in RD(w)}\#\{a{<}i\ |\ w(a){<}j\}\cdot\#\{i{<}c{<}w^{-1}(j)\ |\ j{<}w(c){<}w(i)\}.
\end{align*}
We first summarize our strategy. We will construct $\P_{(i,j)}\subset\RC(w)$, a set of RC-graphs of $w$ with different labels than that of $B_w$, whose cardinality is at least $\# A_{(i,j)}\cdot\# C_{(i,j)}$. We finish the proof by showing that such sets $\P_{(i,j)}$ don't intersect for distinct $(i,j)\in RD(w)$.

Now fix $(i,j)\in RD(w)$. Let $C_{(i,j)}$ be $\{c_1,\ldots,c_q\}$, ordered (from left to right) such that $j<w(c_1)<\cdots<w(c_q)<w(i)$. Notice that we don't have to do anything if $C_{(i,j)}=\emptyset$. The choice of $c_1$ makes sure that there are no $k$ such that $i\leq k\leq c_1$ and $j\leq w(k)\leq w(c_1)$ except $k=c_1$ or in other words, there are no dots (permutation entries) strictly inside the rectangle from $(i,j)$ to $(c_1,w(c_1))$ in the Rothe diagram $RD(w)$.

We will now construct a special RC-graph $D_{(i,j)}\in \RC(w)$ via simple ladder moves from $B_w$ such that non-simple ladder moves are ready to be applied to $D_{(i,j)}$. Let $I=\{i'\ |\ i\leq i'\leq c_1,\ (i',j)\in RD(w)\}$ and $J=\{j'\ |\ j\leq j'\leq w(c_1),\ (i,j')\in RD(w)\}$. Since there are no permutation entries in the rectangle in $RD(w)$ from $(i,j)$ to $(c_1,w(c_1))$, all squares inside this region are exactly $I\times J\setminus\{(c_1,w(c_1))\}$. See Figure~\ref{fig:AIJc1} for an example of these squares. Let $I=\{i=i_0<i_1<\cdots<i_r=c_1\}$, $r\geq1$ and $J=\{j=j_0<j_1<\cdots<j_m=w(c_1)\}$, $m\geq1$. For each $i_k\in I$, in increasing order of $k$ from $0$ to $r$, and for each $j'\geq j$ such that $(i_k,j')\in RD(w)$ (not just $j'\in J$) in decreasing order of $j'$, apply a simple ladder move $i_k-i-k$ times in $B_w$ to $\beta(i_k,j')$. Let this RC-graph be $D_{(i,j)}^{(0)}$. See Figure~\ref{fig:AIJc1} for an example. Intuitively, from $B_w$, we apply simple ladder moves to row $i_k$ starting from the crossing $\beta(i_k,j)$ enough times such that the squares $I\times J\setminus\{(c_1,w(c_1))\}\subset RD(w)$ form a consecutive rectangle without a corner in $D_{(i,j)}^{(0)}$. To see that such process is possible, it suffices to show that $(s,t)\in RD(w)$ won't interfere these simple moves, for some $i\leq s\leq c_1$. If $s\notin I$, by definition of $I$, $w(s)<j$ so $t<j$. Then for $(i',j')\in I\times J\setminus\{c_1,w(c_1)\}$, if $i'<s$, clearly $(s,t)$ won't be in the way since $\beta(i',j')$ are moving up by simple ladder moves but $(s,t)$ is lower (at a larger row number) to start with, and if $i'>s$, with $j'\geq j>t$, by Lemma~\ref{lem:diagdiff}, we are done as well. The case $s\in I$ is argued in the exact same way. Recall that if $(i',j')\in RD(w)$ and $(i'',j'')\in RD(w)$ are in the same row (or column) and there are no other boxes in between, then in $B_w$, the diagonals where they are on differ by exactly 1. As a result, from $B_w$ to $D_{(i,j)}^{(0)}$, we have moved $\beta(I\times J\setminus\{(c_1,w(c_1))\})$ to $\{i,i+1,\ldots,i+r\}\times\{l,l+1\ldots,l+m\}\setminus\{(i+r,l+m)\}$ where $\beta(i,j)=(i,l)$.
\begin{figure}[h!]
\centering
\begin{tikzpicture}[scale=0.183000000000000]
\draw(2,-2)--(22,-2)--(22,-22)--(2,-22)--(2,-2);
\draw[gray!20!white,ultra thin,step=2] (2,-2) grid (22,-22);
\node at (7,-3) {\footnotesize $\bullet$};
\draw(7,-22)--(7,-3)--(22,-3);
\node at (19,-5) {\footnotesize $\bullet$};
\draw(19,-22)--(19,-5)--(22,-5);
\node at (5,-7) {\footnotesize $\bullet$};
\draw(5,-22)--(5,-7)--(22,-7);
\node at (21,-9) {\footnotesize $\bullet$};
\draw(21,-22)--(21,-9)--(22,-9);
\node at (3,-11) {\footnotesize $\bullet$};
\draw(3,-22)--(3,-11)--(22,-11);
\node at (17,-13) {\footnotesize $\bullet$};
\draw(17,-22)--(17,-13)--(22,-13);
\node at (11,-15) {\footnotesize $\bullet$};
\draw(11,-22)--(11,-15)--(22,-15);
\node at (15,-17) {\footnotesize $\bullet$};
\draw(15,-22)--(15,-17)--(22,-17);
\node at (9,-19) {\footnotesize $\bullet$};
\draw(9,-22)--(9,-19)--(22,-19);
\node at (13,-21) {\footnotesize $\bullet$};
\draw(13,-22)--(13,-21)--(22,-21);
\fill[gray!50!white, draw=black] (2,-2) rectangle (4,-4);
\fill[gray!50!white, draw=black] (4,-2) rectangle (6,-4);
\fill[gray!50!white, draw=black] (2,-4) rectangle (4,-6);
\fill[gray!50!white, draw=black] (4,-4) rectangle (6,-6);
\fill[gray!50!white, draw=black] (8,-4) rectangle (10,-6);
\fill[gray!50!white, draw=black] (10,-4) rectangle (12,-6);
\fill[gray!50!white, draw=black] (12,-4) rectangle (14,-6);
\fill[gray!50!white, draw=black] (14,-4) rectangle (16,-6);
\fill[gray!50!white, draw=black] (16,-4) rectangle (18,-6);
\fill[gray!50!white, draw=black] (2,-6) rectangle (4,-8);
\fill[gray!50!white, draw=black] (2,-8) rectangle (4,-10);
\fill[gray!50!white, draw=black] (8,-8) rectangle (10,-10);
\fill[gray!50!white, draw=black] (10,-8) rectangle (12,-10);
\fill[gray!50!white, draw=black] (12,-8) rectangle (14,-10);
\fill[gray!50!white, draw=black] (14,-8) rectangle (16,-10);
\fill[gray!50!white, draw=black] (16,-8) rectangle (18,-10);
\fill[gray!50!white, draw=black] (8,-12) rectangle (10,-14);
\fill[gray!50!white, draw=black] (10,-12) rectangle (12,-14);
\fill[gray!50!white, draw=black] (12,-12) rectangle (14,-14);
\fill[gray!50!white, draw=black] (14,-12) rectangle (16,-14);
\fill[gray!50!white, draw=black] (8,-14) rectangle (10,-16);
\fill[gray!50!white, draw=black] (8,-16) rectangle (10,-18);
\fill[gray!50!white, draw=black] (12,-16) rectangle (14,-18);
\draw(8,-8)--(10,-10);
\draw(10,-8)--(8,-10);
\draw(10,-8)--(12,-10);
\draw(12,-8)--(10,-10);
\draw(8,-12)--(10,-14);
\draw(10,-12)--(8,-14);
\draw(10,-12)--(12,-14);
\draw(12,-12)--(10,-14);
\draw(8,-14)--(10,-16);
\draw(10,-14)--(8,-16);
\node at (1,-9) {$i$};
\node at (9,-1) {$j$};
\node at (1,-15) {$c_1$};
\end{tikzpicture}
\quad
\begin{tikzpicture}[scale=0.183000000000000]
\node at (3,-3) {\normalsize $+$};
\node at (5,-3) {\normalsize $+$};
\node at (7,-3) {\normalsize $\cdot$};
\node at (9,-3) {\normalsize $\cdot$};
\node at (11,-3) {\normalsize $\cdot$};
\node at (13,-3) {\normalsize $\cdot$};
\node at (15,-3) {\normalsize $\cdot$};
\node at (17,-3) {\normalsize $\cdot$};
\node at (19,-3) {\normalsize $\cdot$};
\node at (21,-3) {\normalsize $\cdot$};
\node at (3,-5) {\normalsize $+$};
\node at (5,-5) {\normalsize $+$};
\node at (7,-5) {\normalsize $+$};
\node at (9,-5) {\normalsize $+$};
\node at (11,-5) {\normalsize $+$};
\node at (13,-5) {\normalsize $+$};
\node at (15,-5) {\normalsize $+$};
\node at (17,-5) {\normalsize $\cdot$};
\node at (19,-5) {\normalsize $\cdot$};
\node at (21,-5) {\normalsize $\cdot$};
\node at (3,-7) {\normalsize $+$};
\node at (5,-7) {\normalsize $\cdot$};
\node at (7,-7) {\normalsize $\cdot$};
\node at (9,-7) {\normalsize $\cdot$};
\node at (11,-7) {\normalsize $\cdot$};
\node at (13,-7) {\normalsize $\cdot$};
\node at (15,-7) {\normalsize $\cdot$};
\node at (17,-7) {\normalsize $\cdot$};
\node at (19,-7) {\normalsize $\cdot$};
\node at (21,-7) {\normalsize $\cdot$};
\node at (3,-9) {\normalsize $+$};
\node at (5,-9) {\normalsize $+$};
\node at (7,-9) {\normalsize $+$};
\node at (9,-9) {\normalsize $+$};
\node at (11,-9) {\normalsize $+$};
\node at (13,-9) {\normalsize $+$};
\node at (15,-9) {\normalsize $\cdot$};
\node at (17,-9) {\normalsize $\cdot$};
\node at (19,-9) {\normalsize $\cdot$};
\node at (21,-9) {\normalsize $\cdot$};
\node at (3,-11) {\normalsize $\cdot$};
\node at (5,-11) {\normalsize $\cdot$};
\node at (7,-11) {\normalsize $\cdot$};
\node at (9,-11) {\normalsize $\cdot$};
\node at (11,-11) {\normalsize $\cdot$};
\node at (13,-11) {\normalsize $\cdot$};
\node at (15,-11) {\normalsize $\cdot$};
\node at (17,-11) {\normalsize $\cdot$};
\node at (19,-11) {\normalsize $\cdot$};
\node at (21,-11) {\normalsize $\cdot$};
\node at (3,-13) {\normalsize $+$};
\node at (5,-13) {\normalsize $+$};
\node at (7,-13) {\normalsize $+$};
\node at (9,-13) {\normalsize $+$};
\node at (11,-13) {\normalsize $\cdot$};
\node at (13,-13) {\normalsize $\cdot$};
\node at (15,-13) {\normalsize $\cdot$};
\node at (17,-13) {\normalsize $\cdot$};
\node at (19,-13) {\normalsize $\cdot$};
\node at (21,-13) {\normalsize $\cdot$};
\node at (3,-15) {\normalsize $+$};
\node at (5,-15) {\normalsize $\cdot$};
\node at (7,-15) {\normalsize $\cdot$};
\node at (9,-15) {\normalsize $\cdot$};
\node at (11,-15) {\normalsize $\cdot$};
\node at (13,-15) {\normalsize $\cdot$};
\node at (15,-15) {\normalsize $\cdot$};
\node at (17,-15) {\normalsize $\cdot$};
\node at (19,-15) {\normalsize $\cdot$};
\node at (21,-15) {\normalsize $\cdot$};
\node at (3,-17) {\normalsize $+$};
\node at (5,-17) {\normalsize $+$};
\node at (7,-17) {\normalsize $\cdot$};
\node at (9,-17) {\normalsize $\cdot$};
\node at (11,-17) {\normalsize $\cdot$};
\node at (13,-17) {\normalsize $\cdot$};
\node at (15,-17) {\normalsize $\cdot$};
\node at (17,-17) {\normalsize $\cdot$};
\node at (19,-17) {\normalsize $\cdot$};
\node at (21,-17) {\normalsize $\cdot$};
\node at (3,-19) {\normalsize $\cdot$};
\node at (5,-19) {\normalsize $\cdot$};
\node at (7,-19) {\normalsize $\cdot$};
\node at (9,-19) {\normalsize $\cdot$};
\node at (11,-19) {\normalsize $\cdot$};
\node at (13,-19) {\normalsize $\cdot$};
\node at (15,-19) {\normalsize $\cdot$};
\node at (17,-19) {\normalsize $\cdot$};
\node at (19,-19) {\normalsize $\cdot$};
\node at (21,-19) {\normalsize $\cdot$};
\node at (3,-21) {\normalsize $\cdot$};
\node at (5,-21) {\normalsize $\cdot$};
\node at (7,-21) {\normalsize $\cdot$};
\node at (9,-21) {\normalsize $\cdot$};
\node at (11,-21) {\normalsize $\cdot$};
\node at (13,-21) {\normalsize $\cdot$};
\node at (15,-21) {\normalsize $\cdot$};
\node at (17,-21) {\normalsize $\cdot$};
\node at (19,-21) {\normalsize $\cdot$};
\node at (21,-21) {\normalsize $\cdot$};
\draw[gray!20!white,ultra thin,step=2] (2,-2) grid (22,-22);
\draw(2,-2)--(22,-2)--(22,-22)--(2,-22)--(2,-2);
\node at (5,-9) {\normalsize $\bigoplus$};
\node at (7,-9) {\normalsize $\bigoplus$};
\node at (3,-13) {\normalsize $\bigoplus$};
\node at (5,-13) {\normalsize $\bigoplus$};
\node at (3,-15) {\normalsize $\bigoplus$};
\end{tikzpicture}
\quad
\begin{tikzpicture}[scale=0.183000000000000]
\node at (3,-3) {\normalsize $+$};
\node at (5,-3) {\normalsize $+$};
\node at (7,-3) {\normalsize $\cdot$};
\node at (9,-3) {\normalsize $\cdot$};
\node at (11,-3) {\normalsize $\cdot$};
\node at (13,-3) {\normalsize $\cdot$};
\node at (15,-3) {\normalsize $\cdot$};
\node at (17,-3) {\normalsize $\cdot$};
\node at (19,-3) {\normalsize $\cdot$};
\node at (21,-3) {\normalsize $\cdot$};
\node at (3,-5) {\normalsize $+$};
\node at (5,-5) {\normalsize $+$};
\node at (7,-5) {\normalsize $+$};
\node at (9,-5) {\normalsize $+$};
\node at (11,-5) {\normalsize $+$};
\node at (13,-5) {\normalsize $+$};
\node at (15,-5) {\normalsize $+$};
\node at (17,-5) {\normalsize $\cdot$};
\node at (19,-5) {\normalsize $\cdot$};
\node at (21,-5) {\normalsize $\cdot$};
\node at (3,-7) {\normalsize $+$};
\node at (5,-7) {\normalsize $\cdot$};
\node at (7,-7) {\normalsize $\cdot$};
\node at (9,-7) {\normalsize $\cdot$};
\node at (11,-7) {\normalsize $\cdot$};
\node at (13,-7) {\normalsize $\cdot$};
\node at (15,-7) {\normalsize $\cdot$};
\node at (17,-7) {\normalsize $\cdot$};
\node at (19,-7) {\normalsize $\cdot$};
\node at (21,-7) {\normalsize $\cdot$};
\node at (3,-9) {\normalsize $+$};
\node at (5,-9) {\normalsize $+$};
\node at (7,-9) {\normalsize $+$};
\node at (9,-9) {\normalsize $+$};
\node at (11,-9) {\normalsize $+$};
\node at (13,-9) {\normalsize $+$};
\node at (15,-9) {\normalsize $\cdot$};
\node at (17,-9) {\normalsize $\cdot$};
\node at (19,-9) {\normalsize $\cdot$};
\node at (21,-9) {\normalsize $\cdot$};
\node at (3,-11) {\normalsize $\cdot$};
\node at (5,-11) {\normalsize $+$};
\node at (7,-11) {\normalsize $+$};
\node at (9,-11) {\normalsize $+$};
\node at (11,-11) {\normalsize $+$};
\node at (13,-11) {\normalsize $\cdot$};
\node at (15,-11) {\normalsize $\cdot$};
\node at (17,-11) {\normalsize $\cdot$};
\node at (19,-11) {\normalsize $\cdot$};
\node at (21,-11) {\normalsize $\cdot$};
\node at (3,-13) {\normalsize $\cdot$};
\node at (5,-13) {\normalsize $+$};
\node at (7,-13) {\normalsize $\cdot$};
\node at (9,-13) {\normalsize $\cdot$};
\node at (11,-13) {\normalsize $\cdot$};
\node at (13,-13) {\normalsize $\cdot$};
\node at (15,-13) {\normalsize $\cdot$};
\node at (17,-13) {\normalsize $\cdot$};
\node at (19,-13) {\normalsize $\cdot$};
\node at (21,-13) {\normalsize $\cdot$};
\node at (3,-15) {\normalsize $\cdot$};
\node at (5,-15) {\normalsize $\cdot$};
\node at (7,-15) {\normalsize $\cdot$};
\node at (9,-15) {\normalsize $\cdot$};
\node at (11,-15) {\normalsize $\cdot$};
\node at (13,-15) {\normalsize $\cdot$};
\node at (15,-15) {\normalsize $\cdot$};
\node at (17,-15) {\normalsize $\cdot$};
\node at (19,-15) {\normalsize $\cdot$};
\node at (21,-15) {\normalsize $\cdot$};
\node at (3,-17) {\normalsize $+$};
\node at (5,-17) {\normalsize $+$};
\node at (7,-17) {\normalsize $\cdot$};
\node at (9,-17) {\normalsize $\cdot$};
\node at (11,-17) {\normalsize $\cdot$};
\node at (13,-17) {\normalsize $\cdot$};
\node at (15,-17) {\normalsize $\cdot$};
\node at (17,-17) {\normalsize $\cdot$};
\node at (19,-17) {\normalsize $\cdot$};
\node at (21,-17) {\normalsize $\cdot$};
\node at (3,-19) {\normalsize $\cdot$};
\node at (5,-19) {\normalsize $\cdot$};
\node at (7,-19) {\normalsize $\cdot$};
\node at (9,-19) {\normalsize $\cdot$};
\node at (11,-19) {\normalsize $\cdot$};
\node at (13,-19) {\normalsize $\cdot$};
\node at (15,-19) {\normalsize $\cdot$};
\node at (17,-19) {\normalsize $\cdot$};
\node at (19,-19) {\normalsize $\cdot$};
\node at (21,-19) {\normalsize $\cdot$};
\node at (3,-21) {\normalsize $\cdot$};
\node at (5,-21) {\normalsize $\cdot$};
\node at (7,-21) {\normalsize $\cdot$};
\node at (9,-21) {\normalsize $\cdot$};
\node at (11,-21) {\normalsize $\cdot$};
\node at (13,-21) {\normalsize $\cdot$};
\node at (15,-21) {\normalsize $\cdot$};
\node at (17,-21) {\normalsize $\cdot$};
\node at (19,-21) {\normalsize $\cdot$};
\node at (21,-21) {\normalsize $\cdot$};
\draw[gray!20!white,ultra thin,step=2] (2,-2) grid (22,-22);
\draw(2,-2)--(22,-2)--(22,-22)--(2,-22)--(2,-2);
\node at (5,-9) {\normalsize $\bigoplus$};
\node at (7,-9) {\normalsize $\bigoplus$};
\node at (5,-11) {\normalsize $\bigoplus$};
\node at (7,-11) {\normalsize $\bigoplus$};
\node at (5,-13) {\normalsize $\bigoplus$};
\end{tikzpicture}
\caption{The Rothe diagram $RD(w)$, the bottom RC-graph $B_w$ and $D_{(i,j)}^{(0)}$ for $w=3,9,2,10,1,8,5,7,4,6$, where $(i,j)=(4,4)$, $\#A_{(i,j)}=2$, $\#C_{(i,j)}=3$, $c_1=7$, $I=\{4,6,7\}$, $J=\{4,5\}$ and squares $I\times J\setminus\{(c_1,w(c_1))\}$ are marked.}
\label{fig:AIJc1}
\end{figure}
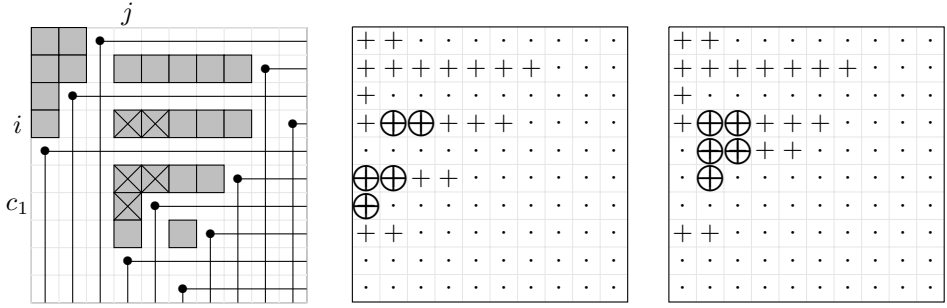

From $D_{(i,j)}^{0}$, for each $i'<i$ in increasing order and for each $j'\geq j$ in decreasing order, we apply simple ladder moves $\#\{a<i'\ |\ w(a)<j\}$ times to $\beta(i',j')$ if $(i',j')\in RD(w)$. Here we abuse the notation of $\beta(-,-)$ to also mean squares in $D_{(i,j)}^{(0)}$ since previously we moved squares after row $i$ and now we move squares before row $i$. Call this new RC-graph $D_{(i,j)}$. The purpose of this step is to make sure that the key squares $\{i,\ldots,i+r\}\times\{l,\ldots,l+m\}\setminus\{(i+r,l+m)\}$ in $D_{(i,j)}$ have enough room above row $i$ to move around. See Figure~\ref{fig:Dij} as a continuing example from Figure~\ref{fig:AIJc1}. More formally, for $(i',j')\in RD(w)$ with $i'<i$, if $j'<j$, then this square has never moved from $B_w$ to $D_{(i,j)}$ and by Lemma~\ref{lem:diagdiff}, $\diag(\beta(i',j'))-\diag(\beta(i,j))\leq -2$ and if $j'\geq j$, then its corresponding square in $D_{(i,j)}$ has a row number at most $i'-\#\{a<i'\ |\ w(a)<j\}\leq i-1-\#A_{(i,j)}$. As a result, $(i',j')\notin D_{(i,j)}$ for all $i-\#A_{(i,j)}\leq i'\leq i-1$ and $j'\geq (i+j-1)-i'$. These important empty squares are also marked in the running example in Figure~\ref{fig:Dij}.

\begin{figure}[h!]
\centering
\begin{tikzpicture}[scale=0.183000000000000]
\node at (3,-3) {\normalsize $+$};
\node at (5,-3) {\normalsize $+$};
\node at (7,-3) {\normalsize $\cdot$};
\node at (9,-3) {\normalsize $\cdot$};
\node at (11,-3) {\normalsize $\cdot$};
\node at (13,-3) {\normalsize $\cdot$};
\node at (15,-3) {\normalsize $\cdot$};
\node at (17,-3) {\normalsize $\cdot$};
\node at (19,-3) {\normalsize $\cdot$};
\node at (21,-3) {\normalsize $\cdot$};
\node at (3,-5) {\normalsize $+$};
\node at (5,-5) {\normalsize $+$};
\node at (7,-5) {\normalsize $+$};
\node at (9,-5) {\normalsize $+$};
\node at (11,-5) {\normalsize $+$};
\node at (13,-5) {\normalsize $+$};
\node at (15,-5) {\normalsize $+$};
\node at (17,-5) {\normalsize $\cdot$};
\node at (19,-5) {\normalsize $\cdot$};
\node at (21,-5) {\normalsize $\cdot$};
\node at (3,-7) {\normalsize $+$};
\node at (5,-7) {\normalsize $\cdot$};
\node at (7,-7) {\normalsize $\cdot$};
\node at (9,-7) {\normalsize $\cdot$};
\node at (11,-7) {\normalsize $\cdot$};
\node at (13,-7) {\normalsize $\cdot$};
\node at (15,-7) {\normalsize $\cdot$};
\node at (17,-7) {\normalsize $\cdot$};
\node at (19,-7) {\normalsize $\cdot$};
\node at (21,-7) {\normalsize $\cdot$};
\node at (3,-9) {\normalsize $+$};
\node at (5,-9) {\normalsize $+$};
\node at (7,-9) {\normalsize $+$};
\node at (9,-9) {\normalsize $+$};
\node at (11,-9) {\normalsize $+$};
\node at (13,-9) {\normalsize $+$};
\node at (15,-9) {\normalsize $\cdot$};
\node at (17,-9) {\normalsize $\cdot$};
\node at (19,-9) {\normalsize $\cdot$};
\node at (21,-9) {\normalsize $\cdot$};
\node at (3,-11) {\normalsize $\cdot$};
\node at (5,-11) {\normalsize $+$};
\node at (7,-11) {\normalsize $+$};
\node at (9,-11) {\normalsize $+$};
\node at (11,-11) {\normalsize $+$};
\node at (13,-11) {\normalsize $\cdot$};
\node at (15,-11) {\normalsize $\cdot$};
\node at (17,-11) {\normalsize $\cdot$};
\node at (19,-11) {\normalsize $\cdot$};
\node at (21,-11) {\normalsize $\cdot$};
\node at (3,-13) {\normalsize $\cdot$};
\node at (5,-13) {\normalsize $+$};
\node at (7,-13) {\normalsize $\cdot$};
\node at (9,-13) {\normalsize $\cdot$};
\node at (11,-13) {\normalsize $\cdot$};
\node at (13,-13) {\normalsize $\cdot$};
\node at (15,-13) {\normalsize $\cdot$};
\node at (17,-13) {\normalsize $\cdot$};
\node at (19,-13) {\normalsize $\cdot$};
\node at (21,-13) {\normalsize $\cdot$};
\node at (3,-15) {\normalsize $\cdot$};
\node at (5,-15) {\normalsize $\cdot$};
\node at (7,-15) {\normalsize $\cdot$};
\node at (9,-15) {\normalsize $\cdot$};
\node at (11,-15) {\normalsize $\cdot$};
\node at (13,-15) {\normalsize $\cdot$};
\node at (15,-15) {\normalsize $\cdot$};
\node at (17,-15) {\normalsize $\cdot$};
\node at (19,-15) {\normalsize $\cdot$};
\node at (21,-15) {\normalsize $\cdot$};
\node at (3,-17) {\normalsize $+$};
\node at (5,-17) {\normalsize $+$};
\node at (7,-17) {\normalsize $\cdot$};
\node at (9,-17) {\normalsize $\cdot$};
\node at (11,-17) {\normalsize $\cdot$};
\node at (13,-17) {\normalsize $\cdot$};
\node at (15,-17) {\normalsize $\cdot$};
\node at (17,-17) {\normalsize $\cdot$};
\node at (19,-17) {\normalsize $\cdot$};
\node at (21,-17) {\normalsize $\cdot$};
\node at (3,-19) {\normalsize $\cdot$};
\node at (5,-19) {\normalsize $\cdot$};
\node at (7,-19) {\normalsize $\cdot$};
\node at (9,-19) {\normalsize $\cdot$};
\node at (11,-19) {\normalsize $\cdot$};
\node at (13,-19) {\normalsize $\cdot$};
\node at (15,-19) {\normalsize $\cdot$};
\node at (17,-19) {\normalsize $\cdot$};
\node at (19,-19) {\normalsize $\cdot$};
\node at (21,-19) {\normalsize $\cdot$};
\node at (3,-21) {\normalsize $\cdot$};
\node at (5,-21) {\normalsize $\cdot$};
\node at (7,-21) {\normalsize $\cdot$};
\node at (9,-21) {\normalsize $\cdot$};
\node at (11,-21) {\normalsize $\cdot$};
\node at (13,-21) {\normalsize $\cdot$};
\node at (15,-21) {\normalsize $\cdot$};
\node at (17,-21) {\normalsize $\cdot$};
\node at (19,-21) {\normalsize $\cdot$};
\node at (21,-21) {\normalsize $\cdot$};
\draw[gray!20!white,ultra thin,step=2] (2,-2) grid (22,-22);
\draw(2,-2)--(22,-2)--(22,-22)--(2,-22)--(2,-2);
\node at (5,-9) {\normalsize $\bigoplus$};
\node at (7,-9) {\normalsize $\bigoplus$};
\node at (5,-11) {\normalsize $\bigoplus$};
\node at (7,-11) {\normalsize $\bigoplus$};
\node at (5,-13) {\normalsize $\bigoplus$};
\end{tikzpicture}
\qquad
\begin{tikzpicture}[scale=0.183000000000000]
\fill[gray!20!white] (4,-6) rectangle (6,-8);
\fill[gray!20!white] (6,-6) rectangle (8,-8);
\fill[gray!20!white] (8,-6) rectangle (10,-8);
\fill[gray!20!white] (10,-6) rectangle (12,-8);
\fill[gray!20!white] (12,-6) rectangle (14,-8);
\fill[gray!20!white] (14,-6) rectangle (16,-8);
\fill[gray!20!white] (16,-6) rectangle (18,-8);
\fill[gray!20!white] (18,-6) rectangle (20,-8);
\fill[gray!20!white] (20,-6) rectangle (22,-8);
\fill[gray!20!white] (6,-4) rectangle (8,-6);
\fill[gray!20!white] (8,-4) rectangle (10,-6);
\fill[gray!20!white] (10,-4) rectangle (12,-6);
\fill[gray!20!white] (12,-4) rectangle (14,-6);
\fill[gray!20!white] (14,-4) rectangle (16,-6);
\fill[gray!20!white] (16,-4) rectangle (18,-6);
\fill[gray!20!white] (18,-4) rectangle (20,-6);
\fill[gray!20!white] (20,-4) rectangle (22,-6);
\node at (3,-3) {\normalsize $+$};
\node at (5,-3) {\normalsize $+$};
\node at (7,-3) {\normalsize $\cdot$};
\node at (9,-3) {\normalsize $+$};
\node at (11,-3) {\normalsize $+$};
\node at (13,-3) {\normalsize $+$};
\node at (15,-3) {\normalsize $+$};
\node at (17,-3) {\normalsize $+$};
\node at (19,-3) {\normalsize $\cdot$};
\node at (21,-3) {\normalsize $\cdot$};
\node at (3,-5) {\normalsize $+$};
\node at (5,-5) {\normalsize $+$};
\node at (7,-5) {\normalsize $\cdot$};
\node at (9,-5) {\normalsize $\cdot$};
\node at (11,-5) {\normalsize $\cdot$};
\node at (13,-5) {\normalsize $\cdot$};
\node at (15,-5) {\normalsize $\cdot$};
\node at (17,-5) {\normalsize $\cdot$};
\node at (19,-5) {\normalsize $\cdot$};
\node at (21,-5) {\normalsize $\cdot$};
\node at (3,-7) {\normalsize $+$};
\node at (5,-7) {\normalsize $\cdot$};
\node at (7,-7) {\normalsize $\cdot$};
\node at (9,-7) {\normalsize $\cdot$};
\node at (11,-7) {\normalsize $\cdot$};
\node at (13,-7) {\normalsize $\cdot$};
\node at (15,-7) {\normalsize $\cdot$};
\node at (17,-7) {\normalsize $\cdot$};
\node at (19,-7) {\normalsize $\cdot$};
\node at (21,-7) {\normalsize $\cdot$};
\node at (3,-9) {\normalsize $+$};
\node at (5,-9) {\normalsize $+$};
\node at (7,-9) {\normalsize $+$};
\node at (9,-9) {\normalsize $+$};
\node at (11,-9) {\normalsize $+$};
\node at (13,-9) {\normalsize $+$};
\node at (15,-9) {\normalsize $\cdot$};
\node at (17,-9) {\normalsize $\cdot$};
\node at (19,-9) {\normalsize $\cdot$};
\node at (21,-9) {\normalsize $\cdot$};
\node at (3,-11) {\normalsize $\cdot$};
\node at (5,-11) {\normalsize $+$};
\node at (7,-11) {\normalsize $+$};
\node at (9,-11) {\normalsize $+$};
\node at (11,-11) {\normalsize $+$};
\node at (13,-11) {\normalsize $\cdot$};
\node at (15,-11) {\normalsize $\cdot$};
\node at (17,-11) {\normalsize $\cdot$};
\node at (19,-11) {\normalsize $\cdot$};
\node at (21,-11) {\normalsize $\cdot$};
\node at (3,-13) {\normalsize $\cdot$};
\node at (5,-13) {\normalsize $+$};
\node at (7,-13) {\normalsize $\cdot$};
\node at (9,-13) {\normalsize $\cdot$};
\node at (11,-13) {\normalsize $\cdot$};
\node at (13,-13) {\normalsize $\cdot$};
\node at (15,-13) {\normalsize $\cdot$};
\node at (17,-13) {\normalsize $\cdot$};
\node at (19,-13) {\normalsize $\cdot$};
\node at (21,-13) {\normalsize $\cdot$};
\node at (3,-15) {\normalsize $\cdot$};
\node at (5,-15) {\normalsize $\cdot$};
\node at (7,-15) {\normalsize $\cdot$};
\node at (9,-15) {\normalsize $\cdot$};
\node at (11,-15) {\normalsize $\cdot$};
\node at (13,-15) {\normalsize $\cdot$};
\node at (15,-15) {\normalsize $\cdot$};
\node at (17,-15) {\normalsize $\cdot$};
\node at (19,-15) {\normalsize $\cdot$};
\node at (21,-15) {\normalsize $\cdot$};
\node at (3,-17) {\normalsize $+$};
\node at (5,-17) {\normalsize $+$};
\node at (7,-17) {\normalsize $\cdot$};
\node at (9,-17) {\normalsize $\cdot$};
\node at (11,-17) {\normalsize $\cdot$};
\node at (13,-17) {\normalsize $\cdot$};
\node at (15,-17) {\normalsize $\cdot$};
\node at (17,-17) {\normalsize $\cdot$};
\node at (19,-17) {\normalsize $\cdot$};
\node at (21,-17) {\normalsize $\cdot$};
\node at (3,-19) {\normalsize $\cdot$};
\node at (5,-19) {\normalsize $\cdot$};
\node at (7,-19) {\normalsize $\cdot$};
\node at (9,-19) {\normalsize $\cdot$};
\node at (11,-19) {\normalsize $\cdot$};
\node at (13,-19) {\normalsize $\cdot$};
\node at (15,-19) {\normalsize $\cdot$};
\node at (17,-19) {\normalsize $\cdot$};
\node at (19,-19) {\normalsize $\cdot$};
\node at (21,-19) {\normalsize $\cdot$};
\node at (3,-21) {\normalsize $\cdot$};
\node at (5,-21) {\normalsize $\cdot$};
\node at (7,-21) {\normalsize $\cdot$};
\node at (9,-21) {\normalsize $\cdot$};
\node at (11,-21) {\normalsize $\cdot$};
\node at (13,-21) {\normalsize $\cdot$};
\node at (15,-21) {\normalsize $\cdot$};
\node at (17,-21) {\normalsize $\cdot$};
\node at (19,-21) {\normalsize $\cdot$};
\node at (21,-21) {\normalsize $\cdot$};
\draw[gray!20!white,ultra thin,step=2] (2,-2) grid (22,-22);
\draw(2,-2)--(22,-2)--(22,-22)--(2,-22)--(2,-2);
\node at (5,-9) {\normalsize $\bigoplus$};
\node at (7,-9) {\normalsize $\bigoplus$};
\node at (5,-11) {\normalsize $\bigoplus$};
\node at (7,-11) {\normalsize $\bigoplus$};
\node at (5,-13) {\normalsize $\bigoplus$};
\end{tikzpicture}
\caption{The RC-graph $D_{(i,j)}^{(0)}$ and $D_{(i,j)}$ for $w=3,9,2,10,1,8,5,7,4,6$ and $(i,j)=(4,4)$ where important empty space is shaded.}
\label{fig:Dij}
\end{figure}
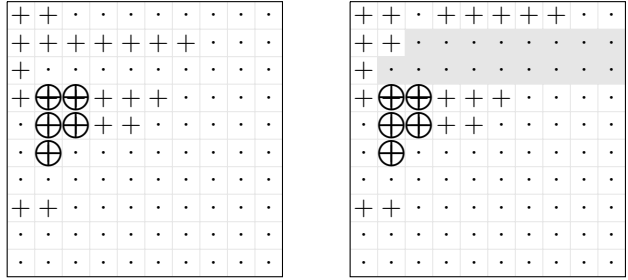

As a summary, we have $\{i,\ldots,i+r\}\times\{l,\ldots,l+m\}\setminus\{(i+r,l+m)\}\subset D_{(i,j)}$, $(i+r,l+m)\notin D_{(i,j)}$ and $(i',j')\notin D_{(i,j)}$ for all $i-\#A_{(i,j)}\leq i'\leq i-1$ and $j'\geq (i+j-1)-i'$. Recall $C_{(i,j)}=\{c_1,\ldots,c_q\}$ and each permutation entry $c_2,\ldots,c_q$ will create a box in row $i$ so we also have $(i,l{+}m{+}1),\ldots,(i,l{+}m{+}q_0)\in D_{(i,j)}$ for some $q_0\geq q-1$ with $(i,l{+}m{+}q_0{+1})\notin D_{(i,j)}$. We can now construct $\P_{(i,j)}\subset\RC(w)$ of cardinality $\#A_{(i,j)}\cdot\#C_{(i,j)}$. Choose any $0\leq a'<\#A_{(i,j)}$ and $0\leq c'<\#C_{(i,j)}=q$. From $D_{(i,j)}$, applying ladder moves of order $r$ to crossings $(i+r,l+m-1),\ldots,(i+r,l)$ in such order to obtain crossings $(i-1,l+m),\ldots,(i-1,l+1)$, applying simple ladder moves to crossings $(i,l{+}m{+}q_0),\ldots,(i,l{+}m{+}q_0{-}c'{+}1)$ to obtain crossings $(i-1,l{+}m{+}q_0{+}1),\ldots,(i-1,l{+}m{+}q_0{-}c'{+}2)$ and applying simple ladder moves $a'$ times to these newly obtained crossings together (from right to left) result in $D_{(i,j)}^{(a',c')}$. Notice that $l{+}m{+}q_0{-}c'{+}2$ is at least 2 greater than $l{+}m$ and that there are at least $\#A_{(i,j)}$ empty rows above them at diagonal number $\geq i+l-1$ so these moves are possible. See Figure~\ref{fig:Dijac} for an example.
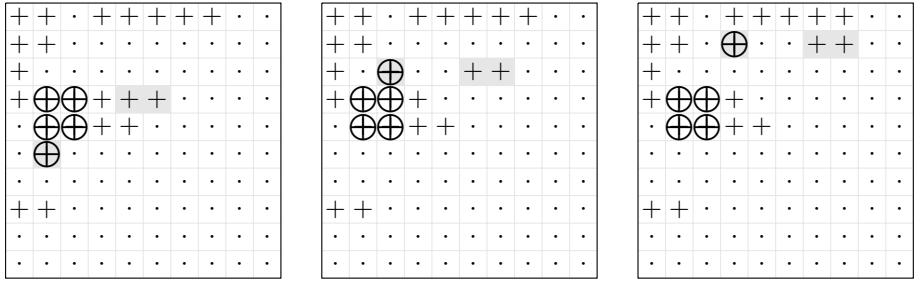
\begin{figure}[h!]
\centering
\begin{tikzpicture}[scale=0.183000000000000]
\fill[gray!20!white] (4,-12) rectangle (6,-14);
\fill[gray!20!white] (10,-8) rectangle (12,-10);
\fill[gray!20!white] (12,-8) rectangle (14,-10);
\node at (3,-3) {\normalsize $+$};
\node at (5,-3) {\normalsize $+$};
\node at (7,-3) {\normalsize $\cdot$};
\node at (9,-3) {\normalsize $+$};
\node at (11,-3) {\normalsize $+$};
\node at (13,-3) {\normalsize $+$};
\node at (15,-3) {\normalsize $+$};
\node at (17,-3) {\normalsize $+$};
\node at (19,-3) {\normalsize $\cdot$};
\node at (21,-3) {\normalsize $\cdot$};
\node at (3,-5) {\normalsize $+$};
\node at (5,-5) {\normalsize $+$};
\node at (7,-5) {\normalsize $\cdot$};
\node at (9,-5) {\normalsize $\cdot$};
\node at (11,-5) {\normalsize $\cdot$};
\node at (13,-5) {\normalsize $\cdot$};
\node at (15,-5) {\normalsize $\cdot$};
\node at (17,-5) {\normalsize $\cdot$};
\node at (19,-5) {\normalsize $\cdot$};
\node at (21,-5) {\normalsize $\cdot$};
\node at (3,-7) {\normalsize $+$};
\node at (5,-7) {\normalsize $\cdot$};
\node at (7,-7) {\normalsize $\cdot$};
\node at (9,-7) {\normalsize $\cdot$};
\node at (11,-7) {\normalsize $\cdot$};
\node at (13,-7) {\normalsize $\cdot$};
\node at (15,-7) {\normalsize $\cdot$};
\node at (17,-7) {\normalsize $\cdot$};
\node at (19,-7) {\normalsize $\cdot$};
\node at (21,-7) {\normalsize $\cdot$};
\node at (3,-9) {\normalsize $+$};
\node at (5,-9) {\normalsize $+$};
\node at (7,-9) {\normalsize $+$};
\node at (9,-9) {\normalsize $+$};
\node at (11,-9) {\normalsize $+$};
\node at (13,-9) {\normalsize $+$};
\node at (15,-9) {\normalsize $\cdot$};
\node at (17,-9) {\normalsize $\cdot$};
\node at (19,-9) {\normalsize $\cdot$};
\node at (21,-9) {\normalsize $\cdot$};
\node at (3,-11) {\normalsize $\cdot$};
\node at (5,-11) {\normalsize $+$};
\node at (7,-11) {\normalsize $+$};
\node at (9,-11) {\normalsize $+$};
\node at (11,-11) {\normalsize $+$};
\node at (13,-11) {\normalsize $\cdot$};
\node at (15,-11) {\normalsize $\cdot$};
\node at (17,-11) {\normalsize $\cdot$};
\node at (19,-11) {\normalsize $\cdot$};
\node at (21,-11) {\normalsize $\cdot$};
\node at (3,-13) {\normalsize $\cdot$};
\node at (5,-13) {\normalsize $+$};
\node at (7,-13) {\normalsize $\cdot$};
\node at (9,-13) {\normalsize $\cdot$};
\node at (11,-13) {\normalsize $\cdot$};
\node at (13,-13) {\normalsize $\cdot$};
\node at (15,-13) {\normalsize $\cdot$};
\node at (17,-13) {\normalsize $\cdot$};
\node at (19,-13) {\normalsize $\cdot$};
\node at (21,-13) {\normalsize $\cdot$};
\node at (3,-15) {\normalsize $\cdot$};
\node at (5,-15) {\normalsize $\cdot$};
\node at (7,-15) {\normalsize $\cdot$};
\node at (9,-15) {\normalsize $\cdot$};
\node at (11,-15) {\normalsize $\cdot$};
\node at (13,-15) {\normalsize $\cdot$};
\node at (15,-15) {\normalsize $\cdot$};
\node at (17,-15) {\normalsize $\cdot$};
\node at (19,-15) {\normalsize $\cdot$};
\node at (21,-15) {\normalsize $\cdot$};
\node at (3,-17) {\normalsize $+$};
\node at (5,-17) {\normalsize $+$};
\node at (7,-17) {\normalsize $\cdot$};
\node at (9,-17) {\normalsize $\cdot$};
\node at (11,-17) {\normalsize $\cdot$};
\node at (13,-17) {\normalsize $\cdot$};
\node at (15,-17) {\normalsize $\cdot$};
\node at (17,-17) {\normalsize $\cdot$};
\node at (19,-17) {\normalsize $\cdot$};
\node at (21,-17) {\normalsize $\cdot$};
\node at (3,-19) {\normalsize $\cdot$};
\node at (5,-19) {\normalsize $\cdot$};
\node at (7,-19) {\normalsize $\cdot$};
\node at (9,-19) {\normalsize $\cdot$};
\node at (11,-19) {\normalsize $\cdot$};
\node at (13,-19) {\normalsize $\cdot$};
\node at (15,-19) {\normalsize $\cdot$};
\node at (17,-19) {\normalsize $\cdot$};
\node at (19,-19) {\normalsize $\cdot$};
\node at (21,-19) {\normalsize $\cdot$};
\node at (3,-21) {\normalsize $\cdot$};
\node at (5,-21) {\normalsize $\cdot$};
\node at (7,-21) {\normalsize $\cdot$};
\node at (9,-21) {\normalsize $\cdot$};
\node at (11,-21) {\normalsize $\cdot$};
\node at (13,-21) {\normalsize $\cdot$};
\node at (15,-21) {\normalsize $\cdot$};
\node at (17,-21) {\normalsize $\cdot$};
\node at (19,-21) {\normalsize $\cdot$};
\node at (21,-21) {\normalsize $\cdot$};
\draw[gray!20!white,ultra thin,step=2] (2,-2) grid (22,-22);
\draw(2,-2)--(22,-2)--(22,-22)--(2,-22)--(2,-2);
\node at (5,-9) {\normalsize $\bigoplus$};
\node at (7,-9) {\normalsize $\bigoplus$};
\node at (5,-11) {\normalsize $\bigoplus$};
\node at (7,-11) {\normalsize $\bigoplus$};
\node at (5,-13) {\normalsize $\bigoplus$};
\end{tikzpicture}
\quad
\begin{tikzpicture}[scale=0.183000000000000]
\fill[gray!20!white] (6,-6) rectangle (8,-8);
\fill[gray!20!white] (12,-6) rectangle (14,-8);
\fill[gray!20!white] (14,-6) rectangle (16,-8);
\node at (3,-3) {\normalsize $+$};
\node at (5,-3) {\normalsize $+$};
\node at (7,-3) {\normalsize $\cdot$};
\node at (9,-3) {\normalsize $+$};
\node at (11,-3) {\normalsize $+$};
\node at (13,-3) {\normalsize $+$};
\node at (15,-3) {\normalsize $+$};
\node at (17,-3) {\normalsize $+$};
\node at (19,-3) {\normalsize $\cdot$};
\node at (21,-3) {\normalsize $\cdot$};
\node at (3,-5) {\normalsize $+$};
\node at (5,-5) {\normalsize $+$};
\node at (7,-5) {\normalsize $\cdot$};
\node at (9,-5) {\normalsize $\cdot$};
\node at (11,-5) {\normalsize $\cdot$};
\node at (13,-5) {\normalsize $\cdot$};
\node at (15,-5) {\normalsize $\cdot$};
\node at (17,-5) {\normalsize $\cdot$};
\node at (19,-5) {\normalsize $\cdot$};
\node at (21,-5) {\normalsize $\cdot$};
\node at (3,-7) {\normalsize $+$};
\node at (5,-7) {\normalsize $\cdot$};
\node at (7,-7) {\normalsize $+$};
\node at (9,-7) {\normalsize $\cdot$};
\node at (11,-7) {\normalsize $\cdot$};
\node at (13,-7) {\normalsize $+$};
\node at (15,-7) {\normalsize $+$};
\node at (17,-7) {\normalsize $\cdot$};
\node at (19,-7) {\normalsize $\cdot$};
\node at (21,-7) {\normalsize $\cdot$};
\node at (3,-9) {\normalsize $+$};
\node at (5,-9) {\normalsize $+$};
\node at (7,-9) {\normalsize $+$};
\node at (9,-9) {\normalsize $+$};
\node at (11,-9) {\normalsize $\cdot$};
\node at (13,-9) {\normalsize $\cdot$};
\node at (15,-9) {\normalsize $\cdot$};
\node at (17,-9) {\normalsize $\cdot$};
\node at (19,-9) {\normalsize $\cdot$};
\node at (21,-9) {\normalsize $\cdot$};
\node at (3,-11) {\normalsize $\cdot$};
\node at (5,-11) {\normalsize $+$};
\node at (7,-11) {\normalsize $+$};
\node at (9,-11) {\normalsize $+$};
\node at (11,-11) {\normalsize $+$};
\node at (13,-11) {\normalsize $\cdot$};
\node at (15,-11) {\normalsize $\cdot$};
\node at (17,-11) {\normalsize $\cdot$};
\node at (19,-11) {\normalsize $\cdot$};
\node at (21,-11) {\normalsize $\cdot$};
\node at (3,-13) {\normalsize $\cdot$};
\node at (5,-13) {\normalsize $\cdot$};
\node at (7,-13) {\normalsize $\cdot$};
\node at (9,-13) {\normalsize $\cdot$};
\node at (11,-13) {\normalsize $\cdot$};
\node at (13,-13) {\normalsize $\cdot$};
\node at (15,-13) {\normalsize $\cdot$};
\node at (17,-13) {\normalsize $\cdot$};
\node at (19,-13) {\normalsize $\cdot$};
\node at (21,-13) {\normalsize $\cdot$};
\node at (3,-15) {\normalsize $\cdot$};
\node at (5,-15) {\normalsize $\cdot$};
\node at (7,-15) {\normalsize $\cdot$};
\node at (9,-15) {\normalsize $\cdot$};
\node at (11,-15) {\normalsize $\cdot$};
\node at (13,-15) {\normalsize $\cdot$};
\node at (15,-15) {\normalsize $\cdot$};
\node at (17,-15) {\normalsize $\cdot$};
\node at (19,-15) {\normalsize $\cdot$};
\node at (21,-15) {\normalsize $\cdot$};
\node at (3,-17) {\normalsize $+$};
\node at (5,-17) {\normalsize $+$};
\node at (7,-17) {\normalsize $\cdot$};
\node at (9,-17) {\normalsize $\cdot$};
\node at (11,-17) {\normalsize $\cdot$};
\node at (13,-17) {\normalsize $\cdot$};
\node at (15,-17) {\normalsize $\cdot$};
\node at (17,-17) {\normalsize $\cdot$};
\node at (19,-17) {\normalsize $\cdot$};
\node at (21,-17) {\normalsize $\cdot$};
\node at (3,-19) {\normalsize $\cdot$};
\node at (5,-19) {\normalsize $\cdot$};
\node at (7,-19) {\normalsize $\cdot$};
\node at (9,-19) {\normalsize $\cdot$};
\node at (11,-19) {\normalsize $\cdot$};
\node at (13,-19) {\normalsize $\cdot$};
\node at (15,-19) {\normalsize $\cdot$};
\node at (17,-19) {\normalsize $\cdot$};
\node at (19,-19) {\normalsize $\cdot$};
\node at (21,-19) {\normalsize $\cdot$};
\node at (3,-21) {\normalsize $\cdot$};
\node at (5,-21) {\normalsize $\cdot$};
\node at (7,-21) {\normalsize $\cdot$};
\node at (9,-21) {\normalsize $\cdot$};
\node at (11,-21) {\normalsize $\cdot$};
\node at (13,-21) {\normalsize $\cdot$};
\node at (15,-21) {\normalsize $\cdot$};
\node at (17,-21) {\normalsize $\cdot$};
\node at (19,-21) {\normalsize $\cdot$};
\node at (21,-21) {\normalsize $\cdot$};
\draw[gray!20!white,ultra thin,step=2] (2,-2) grid (22,-22);
\draw(2,-2)--(22,-2)--(22,-22)--(2,-22)--(2,-2);
\node at (5,-9) {\normalsize $\bigoplus$};
\node at (7,-9) {\normalsize $\bigoplus$};
\node at (5,-11) {\normalsize $\bigoplus$};
\node at (7,-11) {\normalsize $\bigoplus$};
\node at (7,-7) {\normalsize $\bigoplus$};
\end{tikzpicture}
\quad
\begin{tikzpicture}[scale=0.183000000000000]
\fill[gray!20!white] (8,-4) rectangle (10,-6);
\fill[gray!20!white] (14,-4) rectangle (16,-6);
\fill[gray!20!white] (16,-4) rectangle (18,-6);
\node at (3,-3) {\normalsize $+$};
\node at (5,-3) {\normalsize $+$};
\node at (7,-3) {\normalsize $\cdot$};
\node at (9,-3) {\normalsize $+$};
\node at (11,-3) {\normalsize $+$};
\node at (13,-3) {\normalsize $+$};
\node at (15,-3) {\normalsize $+$};
\node at (17,-3) {\normalsize $+$};
\node at (19,-3) {\normalsize $\cdot$};
\node at (21,-3) {\normalsize $\cdot$};
\node at (3,-5) {\normalsize $+$};
\node at (5,-5) {\normalsize $+$};
\node at (7,-5) {\normalsize $\cdot$};
\node at (9,-5) {\normalsize $+$};
\node at (11,-5) {\normalsize $\cdot$};
\node at (13,-5) {\normalsize $\cdot$};
\node at (15,-5) {\normalsize $+$};
\node at (17,-5) {\normalsize $+$};
\node at (19,-5) {\normalsize $\cdot$};
\node at (21,-5) {\normalsize $\cdot$};
\node at (3,-7) {\normalsize $+$};
\node at (5,-7) {\normalsize $\cdot$};
\node at (7,-7) {\normalsize $\cdot$};
\node at (9,-7) {\normalsize $\cdot$};
\node at (11,-7) {\normalsize $\cdot$};
\node at (13,-7) {\normalsize $\cdot$};
\node at (15,-7) {\normalsize $\cdot$};
\node at (17,-7) {\normalsize $\cdot$};
\node at (19,-7) {\normalsize $\cdot$};
\node at (21,-7) {\normalsize $\cdot$};
\node at (3,-9) {\normalsize $+$};
\node at (5,-9) {\normalsize $+$};
\node at (7,-9) {\normalsize $+$};
\node at (9,-9) {\normalsize $+$};
\node at (11,-9) {\normalsize $\cdot$};
\node at (13,-9) {\normalsize $\cdot$};
\node at (15,-9) {\normalsize $\cdot$};
\node at (17,-9) {\normalsize $\cdot$};
\node at (19,-9) {\normalsize $\cdot$};
\node at (21,-9) {\normalsize $\cdot$};
\node at (3,-11) {\normalsize $\cdot$};
\node at (5,-11) {\normalsize $+$};
\node at (7,-11) {\normalsize $+$};
\node at (9,-11) {\normalsize $+$};
\node at (11,-11) {\normalsize $+$};
\node at (13,-11) {\normalsize $\cdot$};
\node at (15,-11) {\normalsize $\cdot$};
\node at (17,-11) {\normalsize $\cdot$};
\node at (19,-11) {\normalsize $\cdot$};
\node at (21,-11) {\normalsize $\cdot$};
\node at (3,-13) {\normalsize $\cdot$};
\node at (5,-13) {\normalsize $\cdot$};
\node at (7,-13) {\normalsize $\cdot$};
\node at (9,-13) {\normalsize $\cdot$};
\node at (11,-13) {\normalsize $\cdot$};
\node at (13,-13) {\normalsize $\cdot$};
\node at (15,-13) {\normalsize $\cdot$};
\node at (17,-13) {\normalsize $\cdot$};
\node at (19,-13) {\normalsize $\cdot$};
\node at (21,-13) {\normalsize $\cdot$};
\node at (3,-15) {\normalsize $\cdot$};
\node at (5,-15) {\normalsize $\cdot$};
\node at (7,-15) {\normalsize $\cdot$};
\node at (9,-15) {\normalsize $\cdot$};
\node at (11,-15) {\normalsize $\cdot$};
\node at (13,-15) {\normalsize $\cdot$};
\node at (15,-15) {\normalsize $\cdot$};
\node at (17,-15) {\normalsize $\cdot$};
\node at (19,-15) {\normalsize $\cdot$};
\node at (21,-15) {\normalsize $\cdot$};
\node at (3,-17) {\normalsize $+$};
\node at (5,-17) {\normalsize $+$};
\node at (7,-17) {\normalsize $\cdot$};
\node at (9,-17) {\normalsize $\cdot$};
\node at (11,-17) {\normalsize $\cdot$};
\node at (13,-17) {\normalsize $\cdot$};
\node at (15,-17) {\normalsize $\cdot$};
\node at (17,-17) {\normalsize $\cdot$};
\node at (19,-17) {\normalsize $\cdot$};
\node at (21,-17) {\normalsize $\cdot$};
\node at (3,-19) {\normalsize $\cdot$};
\node at (5,-19) {\normalsize $\cdot$};
\node at (7,-19) {\normalsize $\cdot$};
\node at (9,-19) {\normalsize $\cdot$};
\node at (11,-19) {\normalsize $\cdot$};
\node at (13,-19) {\normalsize $\cdot$};
\node at (15,-19) {\normalsize $\cdot$};
\node at (17,-19) {\normalsize $\cdot$};
\node at (19,-19) {\normalsize $\cdot$};
\node at (21,-19) {\normalsize $\cdot$};
\node at (3,-21) {\normalsize $\cdot$};
\node at (5,-21) {\normalsize $\cdot$};
\node at (7,-21) {\normalsize $\cdot$};
\node at (9,-21) {\normalsize $\cdot$};
\node at (11,-21) {\normalsize $\cdot$};
\node at (13,-21) {\normalsize $\cdot$};
\node at (15,-21) {\normalsize $\cdot$};
\node at (17,-21) {\normalsize $\cdot$};
\node at (19,-21) {\normalsize $\cdot$};
\node at (21,-21) {\normalsize $\cdot$};
\draw[gray!20!white,ultra thin,step=2] (2,-2) grid (22,-22);
\draw(2,-2)--(22,-2)--(22,-22)--(2,-22)--(2,-2);
\node at (5,-9) {\normalsize $\bigoplus$};
\node at (7,-9) {\normalsize $\bigoplus$};
\node at (5,-11) {\normalsize $\bigoplus$};
\node at (7,-11) {\normalsize $\bigoplus$};
\node at (9,-5) {\normalsize $\bigoplus$};
\end{tikzpicture}
\caption{The RC-graph $D_{(i,j)}$, $D_{(i,j)}^{(0,2)}$ and $D_{(i,j)}^{(1,2)}$ for $w=3,9,2,10,1,8,5,7,4,6$, $(i,j)=(4,4)$ where the crossings moved are shaded.}
\label{fig:Dijac}
\end{figure}

We see that $\#\P_{(i,j)}=\#A_{(i,j)}\cdot\#C_{(i,j)}$ so it remains to show that these RC-graphs are distinct for different $(i,j)$'s. Let $D$ be an RC-graph that is in some $\P_{(i,j)}$ and we will show that we can recover $(i,j)$ from $D$. Compare $D$ with $B_w$. Recall that a crossing has type $(a,b)$ if strand $a$ and strand $b$ intersects at this crossing. Let $X$ be the set of inversions of $w$ such that the crossing in $D$ with type $(a,b)$ has different diagonal number than the crossing in $B_w$ with type $(a,b)$. Notice that simple ladder moves do not change the diagonal number of a crossing of a fixed type. By the construction illustrated above, the crossings in $D$ with types in $X$ must all be in the same row and are consecutive. Let them be $(i',j'),\ldots,(i',j'')$. Let $i$ be the smallest integer such that $i>i'$ and $(i,i'+j'-i)\in D$. Then $(i,j)\in RD(w)$ is the box that corresponds to $\beta(i,j)=(i,i'+j'-i)\in B_w$.
\end{proof}

\section{The main conjecture}\label{sec:conj}
As Theorem~\ref{thm:main} strengthens Theorem~\ref{thm:weigandt}, it is natural to ask how far we can push the results of this form. In this section, we provide our main conjecture which suggests that there is strong relation between $\S_w(1)$ and patterns contained in $w$.

We define a sequence of integers $\{c_u\}_{m\geq1,u\in S_m}$ recursively: $$c_w:=\S_w(1)-1-\sum_{|u|<|w|}c_up_u(w)$$
where $|u|=m$ if $u\in S_m$. 

In other words, we have $\S_w(1)=1+\sum_{|u|\leq|w|}c_up_u(w)$.
\begin{lemma}\label{lem:stab}
For $w\in S_n$ with $w(n)=n$, $c_w=0$.
\end{lemma}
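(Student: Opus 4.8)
The plan is to prove Lemma~\ref{lem:stab} by induction on $n=|w|$, comparing $w$ with its restriction. Given $w\in S_n$ with $w(n)=n$, let $w'\in S_{n-1}$ be the permutation with $w'(i)=w(i)$ for $1\leq i\leq n-1$; since $w$ and $w'$ represent the same element of $S_{\infty}$, we have $\mathfrak{S}_w(1)=\mathfrak{S}_{w'}(1)$. The base case $n=1$ is immediate, as $c_{\mathrm{id}_1}=\mathfrak{S}_{\mathrm{id}_1}(1)-1=0$. So assume the statement for all sizes less than $n$.

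The key input is a one-step recursion for pattern counts: for $u\in S_m$,
\[ p_u(w)=p_u(w')+[\,u(m)=m\,]\cdot p_{u\downarrow}(w'), \]
where $u\downarrow\in S_{m-1}$ denotes $u$ with its last entry deleted (which, when $u(m)=m$, is already a permutation of $\{1,\dots,m-1\}$). This follows by splitting an occurrence $a_1<\cdots<a_m$ of $u$ in $w$ according to whether $a_m=n$ or $a_m<n$: in the second case one gets precisely the occurrences of $u$ in $w'$, while in the first case $w(a_m)=n$ forces $u(m)=m$ and the remaining indices form an occurrence of $u\downarrow$ in $w'$. One also has $p_u(w')=0$ once $|u|>n-1$.

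Now substitute this into $\mathfrak{S}_w(1)-1=\sum_{|u|\leq n}c_u\,p_u(w)$. The terms $\sum_{|u|\leq n}c_u\,p_u(w')$ collapse to $\mathfrak{S}_{w'}(1)-1$ (the summands with $|u|=n$ vanish), and reindexing the remaining terms by $v=u\downarrow$, writing $v\oplus 1$ for the permutation obtained by appending a fixed point to $v$, gives
\[ \mathfrak{S}_w(1)-1=\bigl(\mathfrak{S}_{w'}(1)-1\bigr)+\sum_{|v|\leq n-1}c_{v\oplus 1}\,p_v(w'). \]
Since $\mathfrak{S}_w(1)=\mathfrak{S}_{w'}(1)$, the last sum vanishes. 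For every $v$ with $|v|<n-1$ the permutation $v\oplus 1$ has size $<n$ and fixes its last point, so $c_{v\oplus 1}=0$ by the inductive hypothesis (the empty $v$ contributing $c_{\mathrm{id}_1}=0$); hence only $v\in S_{n-1}$ survives, and $p_v(w')=\delta_{v,w'}$ because the unique length-$(n-1)$ subword of $w'$ is $w'$ itself. Therefore $c_{w'\oplus 1}=0$, i.e.\ $c_w=0$, completing the induction.

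I expect the only delicate point to be the bookkeeping around the pattern recursion and the reindexing $u\mapsto u\downarrow$ — in particular checking that the flattening in Definition~\ref{def:patternContainment} behaves as claimed when $u(m)=m$, and that no spurious terms appear at sizes $0$ or $n$. Everything else is routine once the induction is set up. An essentially equivalent framing is that the change of basis between the functions $w\mapsto \mathfrak{S}_w(1)-1$ and the pattern functions $w\mapsto p_u(w)$ is compatible with appending a fixed point, forcing $c_u=0$ whenever $u$ fixes its last point; I would present whichever version reads more cleanly.
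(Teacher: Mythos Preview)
Your proof is correct and follows essentially the same induction as the paper: both use stability ($\mathfrak{S}_w(1)=\mathfrak{S}_{w'}(1)$), the observation that occurrences of $u$ in $w$ using the position $n$ force $u(|u|)=|u|$, and the inductive hypothesis to kill those terms. The only cosmetic difference is bookkeeping---you phrase the pattern count via the explicit recursion $p_u(w)=p_u(w')+[u(m)=m]\,p_{u\downarrow}(w')$ and reindex by $v=u\downarrow$, whereas the paper simply notes $p_u(w)=p_u(w')$ whenever $u(|u|)\neq |u|$ and that $c_u=0$ otherwise, then splits the remaining sum at $|u|=n-1$ to get $c_w=c_{w'}-c_{w'}=0$.
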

\begin{proof}
We prove the claim by induction. When $w=\id$ is the identity permutation, $\S_{\id}(1)=1$ so $c_{\id}=0$. Now assume that $w(n)=n$ with $w\in S_n$, $n\geq2$ and let $w'\in S_{n-1}$ be the permutation such that $w'(i)=w(i)$ for all $i=1,\ldots,n-1$. The stability property of Schubert polynomials implies that $\S_{w'}(1)=\S_w(1)$. We also observe that for $u\in S_k$ with $k\leq n-1$, $p_u(w')=p_u(w)$ except $u(k)=k$, in which case $c_u=0$ by induction hypothesis. Thus,
\begin{align*}
c_w=&\S_w(1)-1-\sum_{|u|\leq n-1}c_up_u(w)\\
=&\S_{w'}(1)-1-\sum_{|u|\leq n-1}c_up_u(w')\\
=&\S_{w'}(1)-1-\sum_{|u|\leq n-2}c_up_u(w')-\sum_{|u|=n-1}c_up_u(w')\\
=&c_{w'}-c_{w'}p_{w'}(w')=0.
\end{align*}
\end{proof}
Lemma~\ref{lem:stab} allows us to define $c_w$ for $w\in S_{\infty}$. The first few $w\in S_{\infty}$ with nonzero values $c_w$ are $c_{132}=1$, $c_{1432}=1$ followed by 23 permutation patterns in $S_5$.
\begin{conjecture}\label{conj:main}
We have $c_w\geq0$ for all $w\in S_{\infty}$.
\end{conjecture}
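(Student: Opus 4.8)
We outline an approach to Conjecture~\ref{conj:main}, isolating the step that we have been unable to complete. \emph{Two reductions.} First, products of the functions $p_u$ expand positively: $p_u(w)\,p_v(w)=\sum_t N^t_{u,v}\,p_t(w)$ with $N^t_{u,v}\in\Z_{\ge 0}$, because a pair consisting of an occurrence of $u$ and an occurrence of $v$ in $w$ is recovered from the pattern $t$ formed by their combined position set in $w$ together with the record of which positions of $t$ come from $u$ and which from $v$. Hence ``$c_w\ge 0$ for all $w$'' is \emph{equivalent} to the statement that $\S_w(1)-1$ is a nonnegative-integer polynomial in the $p_u$. Second, $c_w=0$ whenever $w$ avoids $132$: every pattern of a $132$-avoiding permutation again avoids $132$, so by induction on length every proper pattern $u$ of such a $w$ has $c_u=0$; and since $w$ avoids $132$ if and only if $w$ is dominant, i.e.\ $\code(w)$ is weakly decreasing, $\S_w=x^{\code(w)}$ and $\S_w(1)=1$, so the recursion for $c_w$ collapses to $0$. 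Thus the expansion is supported on $132$-containing patterns, and we may assume $w$ contains $132$; in particular the conjecture predicts $c_v>0$ only when $v$ contains $132$.

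\emph{An inflation map.} The plan is to upgrade the lower-bound construction of Theorem~\ref{thm:main} to an exact count of $\RC(w)$. For each occurrence $o$ of a pattern $v$ in $w$ --- allowing $v=w$, and the empty pattern, which occurs exactly once in every $w$ --- we seek an injection $\iota_o\colon\RC(v)\hookrightarrow\RC(w)$, natural under composition of occurrences, carrying $E\in\RC(v)$ to the RC-graph of $w$ in which the pipes belonging to $o$ cross one another in the combinatorial pattern prescribed by $E$ while every remaining crossing sits in the position it occupies in $B_w$; in particular $\iota$ sends the unique RC-graph of the empty pattern to $B_w$. This is the global form of the surgery in the proof of Theorem~\ref{thm:main}: there, for a box $(i,j)\in RD(w)$ one transports $B_w$ by simple ladder moves to $D_{(i,j)}$ --- using Lemma~\ref{lem:diagdiff} to keep the relevant pipes from interfering --- and then encodes the $\#A_{(i,j)}\cdot\#C_{(i,j)}$ choices by high-order ladder moves; likewise the chain of simple ladder moves from $B_w$ to $T_w$ underlying Theorem~\ref{thm:weigandt} accounts for the $v=132$ layer, and $w=1432$ in Figure~\ref{fig:1432} is the smallest instance beyond it. Granting such maps, set $\mathcal{Q}_v:=\RC(v)\setminus\bigcup_{o'}\mathrm{im}(\iota_{o'})$, the union over all occurrences $o'$ of proper patterns of $v$; then $\mathcal{Q}_v\subseteq\RC(v)$, so $\#\mathcal{Q}_v\ge 0$. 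The conjecture then follows from two structural properties: (i) every $D\in\RC(w)$ lies in $\mathrm{im}(\iota_o)$ for some occurrence $o$; and (ii) for each $D$, the occurrences $o$ with $D\in\mathrm{im}(\iota_o)$ have a unique minimal element under inclusion of position sets. Indeed (i) and (ii) give a bijection $\RC(w)=\bigsqcup_{o\,:\,v\hookrightarrow w}\mathcal{Q}_v$, hence $\S_w(1)=\#\RC(w)=\sum_v p_v(w)\,\#\mathcal{Q}_v$ (the empty-pattern term supplying the $1$), and comparing with $\S_w(1)=1+\sum_v c_v\,p_v(w)$ --- whose coefficients are unique by Lemma~\ref{lem:stab} and the linear independence of the $p_v$ --- yields $c_v=\#\mathcal{Q}_v\ge 0$.

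\emph{The main obstacle.} We expect property (ii) to be the crux. Defining $\iota_o$ so that the output is a \emph{reduced} RC-graph, and checking naturality along $u\hookrightarrow v\hookrightarrow w$, is already delicate: restricting a reduced pipe dream to a subset of its pipes does produce a reduced pipe dream of the sub-permutation (the pseudo-line-arrangement picture), which makes a consistent choice plausible, but pinning it down canonically is nontrivial, as the ladder-move bookkeeping already in the proof of Theorem~\ref{thm:main} illustrates. Property (i) seems within reach, since every RC-graph is reached from $B_w$ by ladder moves, so its deviation from $B_w$ is confined to a region that ought to be carved out by an occurrence. But (ii) is a genuinely global compatibility statement --- if an RC-graph ``looks inflated'' from two occurrences then it is inflated from the occurrence carved out by their common positions, a meet-type condition for $\RC$ relative to pattern containment --- and it is precisely what the lower-bound argument here sidesteps, since for that one only needs \emph{some} injection from (occurrences)$\,\times\,$(selected RC-graphs) into $\RC(w)$ admitting a left inverse, not a partition of all of $\RC(w)$. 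Proving (ii) will likely require a normal-form theorem for RC-graphs under ladder moves isolating, for each $D$, a canonical minimal active window, together with a definition of $\iota_o$ tuned so that (i) and (ii) hold simultaneously. A different, less structural route would be to extract from iterated transition (Monk-type) recursions for $\S_w$ an explicit nonnegative-integer polynomial identity expressing $\S_w(1)-1$ in the $p_u$; the difficulty there is that such recursions do not visibly respect pattern containment.
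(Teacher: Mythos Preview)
The paper does not prove this statement: it is stated as Conjecture~\ref{conj:main}, accompanied only by the remark that it has been checked by computer for $n\le 8$ and by a few sample computations of $c_w$. There is therefore no ``paper's own proof'' to compare against.

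Your write-up is honest about this: you explicitly call it an \emph{approach} and isolate the step you cannot complete. As a research programme it is reasonable, and the observation that $c_w=0$ for $132$-avoiding $w$ matches what the paper records. But as a proof it has a genuine gap, and not only at the place you flag. Even before reaching property~(ii), the existence of the maps $\iota_o$ themselves is not established. Your parenthetical that ``restricting a reduced pipe dream to a subset of its pipes does produce a reduced pipe dream of the sub-permutation'' is correct at the level of wiring diagrams (no two strands cross twice), but it does \emph{not} say that deleting the corresponding rows and columns of the grid and sliding the remaining crossings produces a valid RC-graph of the pattern; conversely, going in the direction you actually need (inflating an RC-graph of $v$ to one of $w$), there is no canonical or even obviously well-defined recipe, and the ladder-move construction in the proof of Theorem~\ref{thm:main} is tailored to a single box $(i,j)\in RD(w)$ rather than to an arbitrary pattern occurrence. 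The naturality requirement $\iota_{o\circ o'}=\iota_o\circ\iota_{o'}$ compounds this. Finally, even granting (i) and (ii), your counting argument needs that the images $\iota_o(\mathcal{Q}_v)$ over \emph{all} occurrences $o$ of \emph{all} patterns $v$ are pairwise disjoint, which is strictly stronger than (ii) as you stated it (uniqueness of a minimal occurrence for each $D$ does not by itself prevent a single $D$ from lying in $\iota_{o_1}(\mathcal{Q}_{v_1})$ and $\iota_{o_2}(\mathcal{Q}_{v_2})$ for incomparable $o_1,o_2$). So the outline, while suggestive, remains several substantial steps away from a proof of what is, in the paper, an open conjecture.
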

Conjecture~\ref{conj:main} has been verified by computer for all $w\in S_n$, $n\leq 8$. Notice that Conjecture~\ref{conj:main} immediately implies Theorem~\ref{thm:main} but only checking $c_{1432}=1$ would not be enough to conclude Theorem~\ref{thm:main}. Computational evidence also seems to suggest that for a fixed $n$, the permutations $w\in S_n$ that achieve maximum are layered. They are listed in Table~\ref{tab:max}.
\begin{table}[h!]
\centering
\begin{tabular}{c|c|c}
$n$ & $\max c_w$ & $w$ \\\hline
3 & 1 & 132 \\
4 & 1 & 1432 \\
5 & 5 & 12543, 21543 \\
6 & 37 & 126543, 216543 \\
7 & 342 & 1327654 \\
8 & 5820 & 13287654
\end{tabular}
\caption{The maximum value of $c_w$ and those permutations $w\in S_n$ that achieve this value for $n\leq 8$.}
\label{tab:max}
\end{table}
This list of permutations is almost identical (except $n=5$) to that of the permutations achieving maximum values at $\S_w(1)$ (\cite{morales2019asymptotics},\cite{stanley2017some}). It is natural to conjecture that these two lists are the same when $n\geq6$.

Certain values of $c_w$'s can be easily computed. First, if $w$ avoids 132 then all of its patterns avoid 132 and it is well-known that $\S_w(1)=1$. So we know $c_w=0$ by induction. Let $w^{(n)}\in S_n$ denote the permutation $1,n,n-1,\ldots,2$. We know that $\S_{w^{(n)}}=C_{n-1}$, the $(n-1)^{th}$ Catalan number \cite{woo2004catalan}. Such $w^{(n)}$ can only contain a permutation $w^{(m)}$, for some $m\leq n$ or $u=m,m-1,\ldots,1$ for which $c_u=0$. A straightforward calculation using induction produces the constants $c_{w^{(n)}}$ for $n\geq3$ and this sequence is recorded as A005043 in OEIS.

\begin{remark}
Conjecture~\ref{conj:main} was observed independently by Christian Gaetz while this paper is in the writing process.
\end{remark}

\section{Simple ladder moves and 1432 avoiding permutations}\label{sec:1432}
In this section, we explore more regarding simple ladder moves on RC-graphs. The main theorem of this section is the following.
\begin{theorem}\label{thm:1432}
The following two conditions for $w\in S_\infty$ are equivalent:
\begin{enumerate}
    \item any two RC-graphs of $w$ are connected by simple ladder moves;
    \item $w$ avoids 1432.
\end{enumerate}
\end{theorem}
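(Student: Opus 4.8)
The plan is to prove the two implications separately. For $(2)\Rightarrow(1)$, I would show that if $w$ avoids $1432$ then the label $\alpha(D)$ is the same for every $D\in\RC(w)$; since a non-simple ladder move strictly increases the label lexicographically and simple ladder moves preserve it, constancy of the label forces every $D$ to be reachable from $B_w$ (hence from every other RC-graph) by \emph{simple} ladder moves only, using the theorem of Bergeron--Billey that every RC-graph is reachable from $B_w$ by \emph{some} sequence of ladder moves. To prove the label is constant, recall $\alpha(D)_k$ counts crossings on diagonal $k$; equivalently $\sum_k\binom{\alpha(D)_k}{?}$-type statistics, but more directly I would argue: a non-simple ladder move, applied anywhere in any chain from $B_w$, witnesses a configuration of four crossings of types $(a,b)$, $(a,c)$, and two in a column forming a $2\times2$-minus-corner block stacked $k\ge1$ rows high, and tracing the four strands involved one extracts a $1432$ pattern in $w$ (the "$1$" is a strand passing left of the block, the "$432$" are the three descending strands realized by the block's crossings). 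Thus if $w$ avoids $1432$, no chain from $B_w$ ever admits a non-simple move at a point where one is forced; combined with the lexicographic monotonicity, $\alpha(D)=\alpha(B_w)$ for all $D$, which completes this direction.

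For $(1)\Rightarrow(2)$, I would prove the contrapositive: if $w$ contains $1432$, exhibit two RC-graphs with different labels (so they cannot be joined by simple ladder moves, which preserve the label). This is essentially already done by the construction in the proof of Theorem~\ref{thm:main}: given a $1432$ occurrence there is a box $(i,j)\in RD(w)$ with $A_{(i,j)},C_{(i,j)}$ both nonempty, and the explicit $D_{(i,j)}^{(a',c')}\in\P_{(i,j)}$ built there have label lexicographically above $\alpha(B_w)$ (they were produced using a ladder move of positive order $r$). Hence $B_w$ and, say, $D_{(i,j)}^{(0,0)}$ are two RC-graphs of $w$ with distinct labels, so no sequence of simple ladder moves connects them, contradicting $(1)$.

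The step I expect to be the main obstacle is the clean extraction of the $1432$ pattern from a forced non-simple ladder move in direction $(2)\Rightarrow(1)$ — specifically, verifying that whenever \emph{some} chain of ladder moves from $B_w$ reaches an RC-graph whose label exceeds $\alpha(B_w)$, one can locate a non-simple move \emph{and} translate its local four-crossing configuration into genuine inversions $a<b<c<d$ of $w$ with $w(a)<w(d)<w(c)<w(b)$. The subtlety is that the crossing positions in an intermediate RC-graph have moved away from their Rothe-diagram positions, so one must argue with strand labels (crossing types), which are invariant, rather than with coordinates: the two crossings in the tall column of the ladder configuration have types $(a,b)$ and $(a,c)$ with the same smaller strand $a$, forcing $w(a)$ small and two descents among three larger strands, and a fourth strand must be threaded in to account for the vertical gap of order $k\ge1$, yielding the full $1432$. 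I would isolate this as a lemma (phrased purely in terms of strand types and the shape of a non-simple ladder move, independent of where in the grid it occurs) and expect it to be the technical heart; the rest is bookkeeping with the monotonicity of labels.
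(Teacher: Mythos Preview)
Your global architecture matches the paper's: $(1)\Rightarrow(2)$ by invoking the construction from the proof of Theorem~\ref{thm:main}, and $(2)\Rightarrow(1)$ by reducing to the first non-simple ladder move along a chain from $B_w$ and deriving a contradiction. The divergence is in how that contradiction is obtained.

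Your plan is to read off a $1432$ pattern directly from the strand types at the ladder. The $321$ part does fall out cleanly: if $r$ is the horizontal strand through the top row of the ladder block and $p,q$ are the two strands at the bottom crossing, one finds $r<p<q$ with $w(r)>w(p)>w(q)$. The gap you correctly flag is the ``$1$'': your heuristic that ``a fourth strand must be threaded in to account for the vertical gap'' does not pin down an index $a<r$ with $w(a)<w(q)$. The empty cells $(i'-1,j')$ and $(i'-1,j'+1)$ above the ladder only tell you that certain strands bend there, not that any of them has both small label and small value. A purely local, coordinate-free lemma of the kind you envision does not seem to be available; the information about the ``$1$'' is global.

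The paper supplies this missing piece by a different mechanism. It first proves a bookkeeping lemma (Lemma~\ref{lem:samerow}): for any $D$ reached from $B_w$ by simple moves, row-adjacent (and dually column-adjacent) crossings in $D$ correspond to boxes in the same row (resp.\ column) of $RD(w)$ with nothing in between. Applied to the $2k{+}1$ ladder crossings, this identifies them with a block $I\times J\setminus\{(i_k,j_2)\}\subset RD(w)$ where $I=\{i_0<\cdots<i_k\}$, $J=\{j_1<j_2\}$, and a short diagonal-counting argument forces $w(i_k)=j_2$. This yields the $321$ pattern $i_0<i_k<w^{-1}(j_1)$ (the same triple as your $(r,p,q)$). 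Now $1432$-avoidance is used in the \emph{opposite} direction from yours: it rules out any $a<i_0$ with $w(a)<j_1$, which forces the entire rectangle $\{1,\dots,i_0\}\times\{1,\dots,j_1\}$ into $RD(w)$, hence into $B_w$, and these boxes are frozen under simple ladder moves, hence lie in $D$. But then $(i'-1,j')\in D$, contradicting the ladder-move hypothesis. In contrapositive form this \emph{does} produce your missing ``$1$'' (the existence of such an $a$ is exactly what makes the cell above the ladder empty), but the engine is the Rothe-diagram correspondence of Lemma~\ref{lem:samerow}, not a local strand-type argument.
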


\begin{remark}
If $w$ avoids 321, which is a stronger condition than $w$ avoiding 1432, then $w$ is fully commutative: any two reduced expressions for $w$ are connected by commutator moves $s_is_j=s_js_i$ for $|i-j|\geq2$, meaning that there are no Coxeter moves $s_is_{i+1}s_i=s_{i+1}s_is_{i+1}$ available for any reduced expression of $w$. This stronger condition immediately implies (1) of Theorem~\ref{thm:1432} since a non-simple ladder move changes how many simple transpositions $s_i$'s are used (or equivalently, the label of $D$) in the reduced expression $\prod_{i=1}^{\infty}\prod_{j=\infty,(i,j)\in D}^{1}s_{i+j-1}$ of $w$ corresponding to $D\in\RC(w)$.
\end{remark}

We start with some observations on simple ladder moves on RC-graphs. Let $D\in\RC(w)$. Consider all positions on diagonal $k$ and $k+1$ in the following order from lower left to upper right: $(k+1,1),(k,1),(k,2),(k-1,2),\ldots,(1,k),(1,k+1)$. Recall that a simple ladder move at coordinate $(a,b)$ require that $(a,b)\in D$, $(a-1,b),(a,b+1),(a-1,b+1)\notin D$ and it changes $(a,b)$ to $(a-1,b+1)$, staying on the same diagonal. Therefore, for all crossings in $D$ on diagonal $k$ and $k+1$, their relative positions in this ordering will never change after simple ladder moves. Notice that each crossing is labeled uniquely so we can identify the same crossing in different RC-graphs.

\begin{lemma}\label{lem:samerow}
Let $D\in\RC(w)$ be obtained from $B_w$ after a sequence of ladder moves. If $(i,j),(i,j+1)\in D$, then their corresponding crossings (crossings with the same label) in $B_w$ are adjacent in the same row as well.
\end{lemma}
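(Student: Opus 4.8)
The plan is to track, for each strand $s$ of the strand diagram, the ordered list $L_s(D)$ of the \emph{types} of the crossings through which $s$ passes horizontally (entering from the left and leaving to the right), and to show $L_s(D)$ is unchanged by any ladder move. Granting this, the lemma follows quickly. If $(i,j),(i,j{+}1)\in D$, then the horizontal strand through $(i,j)$ is also the horizontal strand through $(i,j{+}1)$, say $s$, and $(i,j),(i,j{+}1)$ are two consecutive entries of $L_s(D)$; so their corresponding crossings of $B_w$ are consecutive in $L_s(B_w)$. In $B_w$ the strand $s$ enters at $(s,0)$ and is horizontal precisely at the crossings $(s,1),\dots,(s,\code(w)_s)$ of row $s$, in that order, and nowhere else: once it turns up out of row $s$ it can only, at an elbow, turn right into another elbow, since $(p,q)\notin B_w$ forces $\code(w)_p<q$ and hence $(p,q{+}1)\notin B_w$. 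Thus two consecutive entries of $L_s(B_w)$ are exactly two horizontally adjacent crossings in row $s$ of $B_w$, which is the conclusion.

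To prove that $L_s$ is invariant under ladder moves I would induct on the number of ladder moves used to obtain $D$ from $B_w$, the base case $D=B_w$ being vacuous. Let $D'\mapsto D$ be a ladder move at $(a,b)$ of order $k\geq0$ (Definition~\ref{def:ladder}), deleting $(a,b)$ and inserting $(a{-}k{-}1,b{+}1)$, and let $h,y$ be the horizontal and vertical strands through $(a,b)$ in $D'$. Only the cells $(a,b)$ and $(a{-}k{-}1,b{+}1)$ change, and inspecting the $2\times(k{+}1)$ ladder block shows that the only strands whose trajectory can differ between $D'$ and $D$ are $h$, $y$, and the horizontal strands of rows $a{-}1,\dots,a{-}k$; every other strand has identical trajectory, hence identical horizontal list. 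Each horizontal strand of a row $a{-}1,\dots,a{-}k$ in fact has unchanged trajectory too, since by monotonicity of strands it meets neither $(a,b)$ nor $(a{-}k{-}1,b{+}1)$. The strand $y$ does move near the block, but in both $D'$ and $D$ it is horizontal at no crossing there, and its trajectory below and above the block is unchanged, so $L_y$ is unchanged.

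The decisive case is $s=h$. Through row $a$ and columns $\leq b{-}1$ the strand $h$ follows the same path in $D$ as in $D'$. In $D'$ it then passes through the crossing $(a,b)$, turns up column $b{+}1$ (vertical at the crossings of rows $a{-}1,\dots,a{-}k$), and at the elbow $(a{-}k{-}1,b{+}1)$ turns right into $(a{-}k{-}1,b{+}2)$. In $D$, since $(a,b)$ is now an elbow, $h$ turns up column $b$ instead (vertical at the crossings of rows $a{-}1,\dots,a{-}k$), and at the elbow $(a{-}k{-}1,b)$ turns right into the newly inserted crossing $(a{-}k{-}1,b{+}1)$, then into $(a{-}k{-}1,b{+}2)$; from there its path and the cells it meets agree with $D'$. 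Hence $L_h$ changes only in that the crossing formerly at $(a,b)$ is replaced, in the same position of the list, by the crossing now at $(a{-}k{-}1,b{+}1)$; since a ladder move preserves the type of the moved crossing, $L_h$ is unchanged, completing the induction.

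The main obstacle is exactly this case $s=h$: unlike every other strand, $h$ genuinely follows a different route (it climbs column $b$ in $D$ where it climbed column $b{+}1$ in $D'$), so one must trace it through the ladder block and check that the sequence of crossing types it meets horizontally is nevertheless unaffected. The point that makes this go through — and that also pins down the behaviour in $B_w$ — is that a strand turning right at an elbow always lands on an elbow, so the vertical climb through the block contributes nothing to any horizontal list.
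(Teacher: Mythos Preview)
Your argument is correct and in fact proves more than the paper needs: your invariant $L_s$ survives ladder moves of any order, so you establish the lemma for arbitrary $D\in\RC(w)$, whereas the paper's own proof (and its restatement after the diamond lemma) is really about $D$ reached from $B_w$ by \emph{simple} ladder moves, which is all that is used downstream. The paper's route is quite different and more positional: it lines up all cells on diagonals $k$ and $k{+}1$ in the order $(k{+}1,1),(k,1),(k,2),\ldots,(1,k),(1,k{+}1)$, observes that a simple ladder move (which fixes the diagonal of every crossing) cannot alter the relative order of crossings along this chain, and then notes that since $(i,j)$ and $(i,j{+}1)$ are consecutive crossings in this chain in $D$, their counterparts $(a,b),(a',b')$ in $B_w$ must be consecutive too; left-justification of $B_w$ forces $b'\neq 1$ and $(a',b'{-}1)\in B_w$, whence $(a,b)=(a',b'{-}1)$. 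That argument is shorter but leans on diagonals being preserved, so it does not extend to non-simple moves. Your strand-tracking approach trades brevity for robustness and also makes the structure of $B_w$ transparent via the nice observation that once a strand in $B_w$ first turns up it never again crosses anything horizontally. One minor wording quibble: your closing remark that ``a strand turning right at an elbow always lands on an elbow'' is a feature of $B_w$ specifically (where you proved it), not of a general $D$; in the ladder-block analysis the relevant point is simply that the vertical climb is through crossings and hence contributes nothing to $L_s$ by definition.
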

\begin{proof}
Recall that crossings of $B_w$ are left-justified: on row $i$, the crossings in $B_w$ are exactly $(i,1),\ldots,(i,\code(w)_i)$.
Let $k=i+j-1$ so that $\diag(i,j)=k$ and $\diag(i,j+1)=k+1$. Consider the chain of all positions on diagonal $k$ and $k+1$: $(k+1,1),(k,1),\ldots,(1,k),(1,k+1)$. We see that in $D$, $(i,j)$ appears before $(i,j+1)$ and that there are no other crossings in between. Let $(a,b),(a',b')\in B_w$ be the corresponding crossings to $(i,j),(i,j+1)\in D$ respectively. With the observation above, in the chain of all positions on diagonal $k$ and $k+1$, $(a,b)$ must appear before $(a',b')$ and that there are no crossings in between. If $b'=1$, then $a'=k+1$ so $(a',b')$ is the start of this chain and there are no positions available for $(a,b)$. Thus $b'\neq1$. Then $(a',b'-1)\in B_w$ which implies $(a',b'-1)=(a,b)$ so we are done.
\end{proof}

We point out that simple ladder moves satisfy the ``diamond condition" and thus have ``local confluence". Specifically, for $D\in\RC(w)$, if we can apply simple ladder moves at two different crossings $(i_1,j_1)\in D$ and $(i_2,j_2)\in D$, resulting in two RC-graphs $D_1$ and $D_2$ respectively. Then we can apply a simple ladder move at $(i_2,j_2)\in D_1$ and $(i_1,j_1)\in D_2$, resulting in the same RC-graph. This ``diamond property" is easy to verify. The following diamond lemma is very commonly seen in the theory of chip-firing, which is used to show that the resulting stable configuration does not depend on the firing sequence.
\begin{lemma}[Diamond lemma]\label{lem:diamond}
Let $G$ be a directed graph with a unique source $s$ such that for any vertex $v$ with at least two outgoing edges $v\rightarrow v_1$, $v\rightarrow v_2$, there exists some other vertex $v'$ such that $v_1\rightarrow v'$ and $v_2\rightarrow v''$. Then for any vertex $v$, all paths from $s$ to $v$ have the same length. Moreover, either $G$ is infinite and does not have a sink or $G$ has a unique sink.
\end{lemma}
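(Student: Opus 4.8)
The plan is to prove a single, stronger statement, from which both conclusions of the lemma drop out: \emph{for every vertex $u$, all maximal directed paths out of $u$ have the same length and the same final vertex}. (A path is maximal if it is not a proper prefix of a longer one; I will assume throughout — as holds in the intended applications, e.g.\ for the graph of $\RC(w)$ under simple ladder moves — that $G$ is acyclic and locally finite, so that every path is finite and maximal paths terminate at sinks.) Granting this statement: given any $s$--$v$ path $P$, extend it to a maximal path $\widehat{P}$ out of $s$; its suffix from $v$ is a maximal path out of $v$, hence has the common length $\ell_v$, while $\widehat{P}$ has the common length $L$ of maximal paths out of $s$, so $|P| = L - \ell_v$, independent of $P$. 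And if $G$ has any sink $t$, then every maximal path out of $s$ ends at $t$, so $t$ is the only sink; if $G$ has no sink it must be infinite, since a finite acyclic digraph always has one.

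First I would prove the stronger statement by strong induction on $n$, the length of one maximal path $M\colon u=m_0\to m_1\to\cdots\to m_n=t$ out of a given vertex $u$. For $n=0$, $u$ is a sink and there is nothing to show. For $n\ge 1$, let $N$ be any maximal path out of $u$, with first vertex $c'$. If $c'=m_1$, then the tails of $N$ and of $M$ are maximal paths out of $m_1$, the latter of length $n-1<n$; the induction hypothesis at $m_1$ gives that $N$ has length $n$ and ends at $t$. If $c'\ne m_1$ — which can only occur when $n\ge 2$, since if $n=1$ then $m_1$ is a sink yet the diamond hypothesis at $u$ would demand a common out-neighbor of $m_1$ and of $c'$ — apply the diamond hypothesis at $u$ to the edges $u\to m_1$ and $u\to c'$, obtaining $e$ with $m_1\to e$ and $c'\to e$. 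Extending $e$ to a maximal path and using the induction hypothesis at $m_1$ produces a maximal path out of $e$ of length $n-2$ ending at $t$; the induction hypothesis at $e$ then upgrades this to: all maximal paths out of $e$ have length $n-2$ and end at $t$. Prepending $c'\to e$ now gives a maximal path out of $c'$ of length $n-1$, so the induction hypothesis at $c'$ yields that all maximal paths out of $c'$ — in particular the tail of $N$ — have length $n-1$ and end at $t$. Hence $|N|=n$ and $N$ ends at $t$, which is the inductive claim.

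The crux is this divergent-first-move case. The real work is to manufacture, at each of the three auxiliary vertices $m_1$, $e$, $c'$, a maximal path of length strictly below $n$ (namely $n-1$, $n-2$, $n-1$) so that the induction hypothesis is legitimately available there and transports the value $n$ and the terminal $t$ all the way back to $N$; checking that these lengths come out exactly as claimed, and that each invocation of the diamond hypothesis is at the out-edges of a genuine common vertex, is essentially the entire content of the proof. I would also be careful to state the acyclicity and local-finiteness hypothesis explicitly rather than tacitly: without it ``maximal path'' is not well behaved and the equal-length conclusion can genuinely fail, so it is not a harmless omission.
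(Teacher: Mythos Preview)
The paper does not actually prove this lemma: immediately after the statement it says ``The proof of Lemma~\ref{lem:diamond} is left as an exercise.'' So there is no in-paper argument to compare against.

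Your proof is correct. The strong induction on the length $n$ of one witnessed maximal path, with the relay $m_1 \to e \to c'$ in the divergent case, is the standard Newman-style argument that local confluence plus termination yields global confluence with a well-defined rank. Each invocation of the induction hypothesis is legitimate because you first manufacture a maximal path of length strictly less than $n$ at the auxiliary vertex before appealing to it. Your explicit addition of acyclicity and finiteness-of-paths is not pedantry: as stated in the paper the lemma is slightly informal, and without termination the equal-length conclusion is false (e.g.\ the graph on $\mathbb{Z}_{\ge 0}$ with edges $k\to k{+}1$ and an extra edge $0\to 2$). In the intended application to simple ladder moves on $\RC(w)$ these hypotheses hold automatically, which is presumably why the paper elides them.
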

The proof of Lemma~\ref{lem:diamond} is left as an exercise.

In our scenario, $B_w$ is the source that generates a directed graph via simple ladder moves. The unique sink is $T_w$ since it is shown that there exists some sequence of simple ladder moves taking $B_w$ to $T_w$ \cite{weigandt2018schubert}, and that ladder move is applicable to $T_w$ because $T_w$ is top-justified. With the help of the diamond lemma, we immediately know that starting from $B_w$, if we apply any sequence of simple ladder moves until no simple ladder moves are available, then we arrive at $T_w$.

We can rephrase Lemma~\ref{lem:samerow} as follows. Let $D\in\RC(w)$ be obtained from $B_w$ via simple ladder moves, or equivalently, obtained from $T_w$ via inverse simple ladder moves. If $(i,j),(i,j+1)\in D$, then their corresponding boxes in $RD(w)$ lie in the same row and that there are no other boxes in between. Dually, if $(i,j),(i+1,j)\in D$, then their corresponding boxes in $RD(w)$ lie in the same column and that there are no other boxes in between.

\begin{proof}[Proof of Theorem~\ref{thm:1432}]
Recall that simple ladder moves don't change the label of an RC-graph while non-simple ladder moves increase the label lexicographically. We also know that all RC-graphs of $w$ can be obtained from $B_w$ by ladder moves. Therefore, it suffices to show that $w$ contains 1432 if and only if we can apply a non-simple ladder move to some RC-graph of $w$. Therefore, if $w$ contains 1432, the proof of Theorem~\ref{thm:main} produces an RC-graph obtained from $B_w$ via simple ladder moves and at least one non-simple ladder move. This shows (1)$\Rightarrow$(2).

The main case of this theorem is (2)$\Rightarrow$(1). Assume the opposite that $w$ avoids 1432 and that we can apply a non-simple ladder move to an RC-graph $D\in\RC(w)$. It suffices to consider the case that $D$ is obtained from $B_w$ via a sequence of simple ladder moves. Assume that $(i',j'),(i'+1,j'),\ldots,(i'+k,j')\in D$, $(i',j'+1),\ldots,(i'+k-1,j'+1)\in D$ and $(i'+k,j'+1)\notin D$ so that a ladder move of order $k$ can be applied to $(i'+k-1,j'+1)$ in $D$. Here $k\geq1$. For these $2k+1$ crossings in $D$, consider their corresponding boxes in $RD(w)$. By Lemma~\ref{lem:samerow}, there exists $I=\{i_0<\cdots<i_k\}$ and $J=\{j_1<j_2\}$ such that these corresponding boxes are $I\times J\setminus\{(i_k,j_2)\}$. As a basic property of Rothe diagrams, if $(a,b)\in RD(w)$ and $(a',b')\in RD(w)$ with $a<a'$ and $b>b'$, then $(a,b')\in RD(w)$. Together with the fact from Lemma~\ref{lem:samerow} that there are no other boxes in between $(i_a,j_1)$ and $(i_a,j_2)$ for $a=0,\ldots,k$ or $(i_a,j_b)$ and $(i_{a+1},j_{b})$ for $b=1,2$ and $a=0,\ldots,k-1$, we conclude that $I\times J\setminus\{(i_k,j_2)\}$ are the only squares with coordinates in $\{i_0,\ldots,i_k\}\times\{j_1,j_2\}\setminus\{(i_k,j_2)\}$. See Figure~\ref{fig:1432avoid} for an example of these boxes.
\begin{figure}[h!]
\centering
\begin{tikzpicture}[scale=0.200000000000000]
\draw(2,-2)--(16,-2)--(16,-16)--(2,-16)--(2,-2);
\draw[gray!20!white,ultra thin,step=2] (2,-2) grid (16,-16);
\node at (9,-3) {\footnotesize $\bullet$};
\draw(9,-16)--(9,-3)--(16,-3);
\node at (7,-5) {\footnotesize $\bullet$};
\draw(7,-16)--(7,-5)--(16,-5);
\node at (13,-7) {\footnotesize $\bullet$};
\draw(13,-16)--(13,-7)--(16,-7);
\node at (15,-9) {\footnotesize $\bullet$};
\draw(15,-16)--(15,-9)--(16,-9);
\node at (3,-11) {\footnotesize $\bullet$};
\draw(3,-16)--(3,-11)--(16,-11);
\node at (11,-13) {\footnotesize $\bullet$};
\draw(11,-16)--(11,-13)--(16,-13);
\node at (5,-15) {\footnotesize $\bullet$};
\draw(5,-16)--(5,-15)--(16,-15);
\fill[gray!50!white, draw=black] (2,-2) rectangle (4,-4);
\fill[gray!50!white, draw=black] (4,-2) rectangle (6,-4);
\fill[gray!50!white, draw=black] (6,-2) rectangle (8,-4);
\fill[gray!50!white, draw=black] (2,-4) rectangle (4,-6);
\fill[gray!50!white, draw=black] (4,-4) rectangle (6,-6);
\fill[gray!50!white, draw=black] (2,-6) rectangle (4,-8);
\fill[gray!50!white, draw=black] (4,-6) rectangle (6,-8);
\fill[gray!50!white, draw=black] (10,-6) rectangle (12,-8);
\fill[gray!50!white, draw=black] (2,-8) rectangle (4,-10);
\fill[gray!50!white, draw=black] (4,-8) rectangle (6,-10);
\fill[gray!50!white, draw=black] (10,-8) rectangle (12,-10);
\fill[gray!50!white, draw=black] (4,-12) rectangle (6,-14);
\draw(4,-6)--(6,-8);
\draw(6,-6)--(4,-8);
\draw(10,-6)--(12,-8);
\draw(12,-6)--(10,-8);
\draw(4,-8)--(6,-10);
\draw(6,-8)--(4,-10);
\draw(10,-8)--(12,-10);
\draw(12,-8)--(10,-10);
\draw(4,-12)--(6,-14);
\draw(6,-12)--(4,-14);
\end{tikzpicture}
\caption{Rothe diagram of $w=4367152$ which avoids 1432, with $I\times J\setminus\{i_k,j_2\}$ marked.}
\label{fig:1432avoid}
\end{figure}
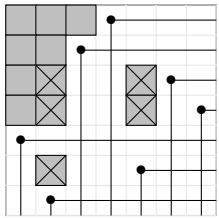

We then claim that $(i_k,j_2)\notin RD(w)$ and in fact $w(i_k)=j_2$. Recall that $(i_{k-1},j_2)\in RD(w)$ corresponds to $(i'+k-1,j'+1)\in D$ and $(i_{k},j_1)\in RD(w)$ corresponds to $(i'+k,j')\in D$, which lie on diagonal $i'{+}j'{+}k{-}1$. Consider the chain of positions on diagonals $i'{+}j{+}k{-}1$ and $i'{+}j{+}k$. Since $(i'+k,j'+1)\notin D$, $(i'+k,j')$ and $(i'+k-1,j'+1)$ are adjacent in this chain. If $(i_k,j_2)\in RD(w)$, then its corresponding crossing in $D$ sits in between, which is a contradiction. As a result $(i_k,j_2)\notin RD(w)$. This means either $j_1<w(i_k)<j_2$, which is impossible since it would imply $(i_0,w(i_k))\in RD(w)$ lying between $(i_0,j_1)$ and $(i_0,j_2)$, or $i_{k-1}<w^{-1}(j_2)<i_k$, which is impossble for the same reason, or $w(i_k)=j_2$, which is only possibility.

As a summary, we see that $(i_0,j_1)\in RD(w)$ and $w(i_k)=j_2$. So $i_0<i_k<w^{-1}(j_1)$ and $w(i_0)>w(i_k)>j_1$. As $w$ avoids $1432$, there does not exist $a<i_0$ such that $w(a)<j_1$. As a result, $(a,b)\in RD(w)$ for all $a\leq i_0$ and $b\leq j_1$. After left-justification to $B_w$, we have $(a,b)\in B_w$ for all $a\leq i_0$ and $b\leq j_1$ as well. These crossings will always be there after any sequence of simple ladder moves. As a result, $(i_0,j_1)\in RD(w)$ corresponds to $(i_0,j_1)\in B_w$ and the same box in $(i',j')\in D$. However, $(i'-1,j')\in D$ as well, contradicting the condition for a ladder move.
\end{proof}

\section*{Acknowledgements}
The author thanks Christian Gaetz, Alex Postnikov and Richard Stanley for helpful conversations. 

\bibliographystyle{plain}

\end{document}